\def\W11{{{\mathop{W^{1,1}_{2}}\limits^{\bullet\,\,\,\,\,\,\,}}}}
\begin{document}
\title{A finite volume method preserving the invariant region property for the   quasimonotone reaction-diffusion  systems}

\author[O.~Author]{Huifang Zhou\affil{1}\comma\corrauth,  Yuchen, Sun\affil{1}, Fuchang Huo\affil{1},}
\address{\affilnum{1} \ School of Mathematics, Jilin University, Changchun 130012, P.R.  China}

 \emails{{\tt zhouhuifang@jlu.edu.cn} (H.~Zhou), {\tt sunyc21@mails.jlu.edu.cn} (Y.~Sun),
          {\tt huofc22@mails.jlu.edu.cn} (F.~Huo)}

\begin{abstract}
We present	a finite volume method preserving the   invariant region property (IRP) for the   reaction-diffusion systems with quasimonotone  functions, including nondecreasing, decreasing, and mixed quasimonotone systems.  
The diffusion terms and  time derivatives are discretized by  a finite volume method  satisfying the discrete maximum principle (DMP)  and the backward Euler method, respectively. 
The discretization leads to an implicit and nonlinear scheme, and it is proved to  preserve the   invariant region property  unconditionally. We construct an iterative algorithm and prove the invariant region property ar each iteration step.   Numerical examples are shown to confirm the accuracy  and invariant region property of our scheme.
\end{abstract}

\ams{65M08,  35K59}
\keywords{Reaction-diffusion  systems, quasimonotone,  nonlinear finite volume scheme, invariant region, distorted meshes.   }

\maketitle

\section{Introduction}
\label{sec1}
Reaction-diffusion systems are mathematical models that describe  the behaviors of a large range of physical,  biological, chemical and electrical  phenomena \cite{Justin2012,Hornung1991,Spigler1992,Colli-Franzone2004,Bourgault2009}. 
They are utilized to mimic the variations in chemical substance concentrations caused by local reactions and diffusions in the field of chemistry,  as well as the spread of infectious diseases and population growth  \cite{Hunacek2022}  in   biology;
the neutron diffusion theory and   the Ginzburg–Landau equations for modeling superconductivity \cite{Du1992}  in physics;  and   the FitzHugh-Nagumo model for simulating the  transmission of electrical impulses  in neurology  and so on.

It is of great importance for the numerical methods to preserve the IRP.  The IRP refers to the property of reaction-diffusion systems  that the solution lies in the range of the initial and boundary values, it reflects the physical constraints of the unknown variables. Hence the numerical solution is expected to preserve the IRP as well. Besides, the IRP of numerical schemes has great importance in establishing the  prior  estimates, existence  and stability  of the solution \cite{Smoller1999}. Proposing the IRP-preserving schemes for reaction-diffusion equations is necessary in both physical and mathematical aspects.  
The finite difference methods   \cite{Jerome1984,Conte2023,Gong2023,Li2021} have been widely applied  to solve the reaction-diffusion equations because of simplicity.  \cite{Jerome1984} employs a fully implicit time-discretization method, and  establishes the IRP and stability  of the scheme by  M-matrix analysis. 
\cite{Li2021} combines    exponential time differencing method and overlapping domain decomposition technique   to get a maximum bound principle (MBP) preserving method for the one-component reaction-diffusion equations, where MBP can  be viewed  as    a  special type of invariant region.
In \cite{Conte2023}, the  nonstandard finite difference method combined  with time-accurate and high stable  explicit method is constructed  to obtain the positivity-preserving  scheme for the reaction-diffusion model describing the vegetation evolution in arid environments.
In \cite{Gong2023}, the authors use the  $\theta$-weighted time-stepping scheme and  corresponding iterative  approach for a class of semilinear parabolic equations,  
where  the discrete  MBP  holds  under the  constraint of time step and mesh size.  
However, most finite difference methods are restricted to  rectangular meshes.  
In addition, the finite element method with  implicit–explicit Euler time-discretization is employed to solve the 3D reaction–diffusion systems in \cite{Frittelli2018},  where the IRP is preserved on Delaunay triangular meshes.
The nonlinear Galerkin method is used in \cite{Sembera2001}  to solve the system of reaction–diffusion equations, in which the algorithm needs to calculate the orthonormal basis of the space composed of eigenvectors of the diffusion  operator.  In   \cite{Zhou2022},  the finite volume method preserving the  IRP  is applied to a specific  type of reaction-diffusion systems called FitzHugh-Nagumo equation  on polygonal meshes.    \cite{Du2021}  develops the unified framework  which covers many    numerical schemes   to obtain a MBP-preserving  method for the semilinear parabolic equations.

The goal of this paper is to propose an IRP-preserving finite volume method for the coupled  quasimonotone  parabolic systems on distorted meshes. Compared to our  previous work  that can only handle specific nonlinear reaction terms in   \cite{Zhou2022}, i.e., $f_1(u,v)=u(1-u)(u-a)-v$, $f_2(u,v) = \rho u- \gamma v$,  this work  can handle more general nonlinear reaction terms.  
We utilize the DMP-preserving finite volume scheme  to  discretize the spacial derivatives, and employ the  fully implicit scheme to  discretize the temporal derivatives.  For the problem with three basic types of quasimonotone functions,  we analyze that  the  implicit scheme is unconditionally IRP-preserving and  exists  at least one solution. To solve the nonlinear scheme, we add a specific  linear term  in the iterative algorithm    and  prove the IRP of  the iterative method.  Numerical results demonstrate that  our scheme reaches second-order accuracy in spacial direction and preserves the IRP for different problems.  We also present  the comparison result  with the nine-point scheme  to demonstrate that it cannot preserve the IRP.

This paper is organized as follows. Section 2 gives the model problem and its  corresponding  invariant region theory.   In Section 3, the fully implicit finite volume scheme  is presented and the IRP is discussed. In  Section 4, the iterative approach is introduced and  its  IRP is analyzed.  In Section 5, we give numerical experiments   to demonstrate  the accuracy and preservation of  IRP.  Finally, the Section 6 provides a summary of this paper.

\section{Invariant region theory of the model problem}
\label{sec2}
In this paper, we will investigate the  coupled system of two  parabolic  equations on  a bounded space-time domain $Q_T=\Omega  \times (0,T]$ as  
\begin{align}
	\partial_t u -\nabla \cdot( \kappa_1 \nabla u )&=  f_1(u,v),  & & \text{in}~~ Q_T,  \label{problem_1}
	\\
	\partial_t v -\nabla \cdot( \kappa_2 \nabla v )&=  f_2(u,v),  & & \text{in}~~ Q_T,   \label{problem_2}
\end{align}
subject  to the initial  conditions $u(\mathbf{x},0)  =u_{0}(\mathbf{x})$ and $v(\mathbf{x},0)  =v_{0}(\mathbf{x})$ on $\Omega$ and Dirichlet boundary  conditions  $u(\mathbf{x},t)  =g_1(\mathbf{x},t)$ and  $ v(\mathbf{x},t) =g_2(\mathbf{x},t)$ on $S_T=\partial \Omega \times (0,T]$.     We suppose   $\Omega$ is an open bounded polygonal domain in $\mathbb{R}^2$, $\partial\Omega\in C^{2}$,   $\kappa_1$ and $\kappa_2$  are coercive tensor-valued functions,  $f_1$ and  $f_2$
are  nonlinear  functions of $u$ and $v$.

The notations of standard Sobolev spaces are employed, with $(\cdot,\cdot)$ representing the $L^2(Q_T)$ inner-product.  Define bilinear forms $B_1(u,\phi_1) = (-u\partial_t\phi_1+\kappa_1 \nabla u,\nabla \phi_1)$ and  $B_2(v,\phi_2) = (-v\partial_t\phi_2+\kappa_2 \nabla v,\nabla \phi_2)$.   We say a   function  $(u,v)\in [W_2^{1,1}(Q_T)$
$\bigcap L^{\infty}(Q_T)]^2$ is a weak solution of the problem \eqref{problem_1}-\eqref{problem_2}
provided that

$\mathrm{(i)}$  for any $(\phi_1 ,\phi_2 )\in [\W11(Q_T)]^2$ and $(\phi_1(\mathbf{x},T) ,\phi_2(\mathbf{x},T))=0$ a.e. in $\Omega$, there hold that 
\begin{align*}
	&B_1(u,\phi_1) = ( f_1(u,v),\phi_1),
	\\
	&B_2(v,\phi_2)= ( f_2(u,v),\phi_2);
\end{align*}

$\mathrm{(ii)}$  $u(\mathbf{x},0) = u_0(\mathbf{x})$, $v(\mathbf{x},0) = u_0(\mathbf{x})$ a.e. in $\Omega$ in the sense of trace;

$\mathrm{(iii)}$ $u(\mathbf{x},t) = g_1(\mathbf{x},t)$ ,  $v(\mathbf{x},t) = g_2(\mathbf{x},t)$ a.e. on $S_T$ in the sense of trace.

\begin{definition}
	$(\mathbf{Invariant~ region~ property})$
	A closed subset $\Sigma=[m_1,M_1]\times [m_2,M_2]$ is called an invariant region
	of the problem \eqref{problem_1}-\eqref{problem_2} if for almost  every  $(u_0,v_0)$ and $(g_1,g_2) \in \Sigma$, the corresponding
	solution $(u,v) \in \Sigma$ for all $0<t \leq T$.
\end{definition}

The function $f_1 (u, v)$ is defined to be quasimonotone nondecreasing if  $f_1$ is nondecreasing for fixed $v$.  Similarly, $f_2(u, v)$ is defined to be quasimonotone nondecreasing (resp., nonincreasing) if  $f_2$ is nondecreasing  for fixed $u$. Quasimontone nonincreasing is defined similarly.

\begin{definition}
	$(\mathbf{Quasimonotone~function})$ 
	For a vector of function $\mathbf{f}=\left(f_1, f_2\right)$ of two components, there are three basic types of quasimonotone functions:  if both $f_1$ and $f_2$ are quasimonotone
	nondecreasing (resp., nonincreasing) for $\left(u, v\right) \in \Sigma$, then  $\mathbf{f}$ is  quasimonotone nondecreasing (resp., nonincreasing);   
	if one of $f_1$ and $f_2$ is quasimonotone nonincreasing, and the other one is nondecreasing in $\Sigma$, then  $\mathbf{f}$ is  mixed quasimonotone.
\end{definition}

\begin{definition}
	$(\mathbf{Lipschitz~continuous})$   
	A vector function $\mathbf{f}=\left(f_1, f_2\right)$  is Lipschitz continuous in $\Sigma$ when the following condition is satisfied:
	There exists a constant $\lambda>0$  such that for any $(u_1, v_1),(u_2, v_2) \in \Sigma$, it holds that
	\begin{align*}
		\begin{gathered}
			\left|f_1\left(u_1, v_1\right)-f_1\left(u_2, v_2\right)\right| \leq \lambda \left(\left|u_1-u_2\right|+\left|v_1-v_2\right|\right), \\
			\left|f_2\left(u_1, v_1\right)-f_2\left(u_2, v_2\right)\right| \leq \lambda \left(\left|u_1-u_2\right|+\left|v_1-v_2\right|\right) .
		\end{gathered}
	\end{align*}
\end{definition}

The following lemma demonstrates that the  quasimonotone reaction-diffusion  systems  possess the  invariant region property under certain hypotheses, whose proof can be found in \cite{ Fife1979}.
\begin{lemma}\label{F-N-invariant_region}
	Suppose $\kappa_1, \kappa_2 \in [L^{\infty}(Q_T)]^{2\times2}$  are  uniformly positive definite in $Q_T$, $(u_0, v_0) \in [H^1(\Omega)]^2$ and there exists $(G_1, G_2) \in\left[W^{2,1}\left(Q_T\right)\right]^2$ such that $(G_1, G_2)|_{S_T}=(g_1, g_2)$. Denote $\Sigma=$ $\left[m_1, M_1\right] \times\left[m_2, M_2\right]$, where $m_1, M_1, m_2, M_2$ are constants.
	Suppose  $(u_0,v_0) $ and $(g_1,g_2) \in \Sigma$, $\mathbf{f}=\left(f_1, f_2\right)$ is quasimonotone and Lipschitz continuous in $\Sigma$ and  satisfies the following  relations
	\begin{align}\label{condition_invariant}
		&f_ 1(m_1,v)\geq0, \quad f_ 1(M_1,v)\leq0, 
		\quad f_2(u,m_2)\geq0,  \quad f_ 2(u,M_2)\leq0,   \quad \forall  (u,v) \in \Sigma, 
	\end{align}  
	then  the  coupled system \eqref{problem_1}-\eqref{problem_2}  has a weak solution $(u,v) \in \Sigma$ in $ [W_2^{1,1}(Q_T) \bigcap  L^{\infty}(Q_T)]^2$   and is unique  in $\Sigma$.
\end{lemma}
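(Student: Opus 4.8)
The statement bundles three assertions --- existence of a weak solution, its confinement to $\Sigma$ (the invariant region property), and uniqueness within $\Sigma$ --- so the plan is to produce one construction that delivers existence and confinement simultaneously, and then treat uniqueness separately. The pivotal observation is that the sign conditions \eqref{condition_invariant} are precisely what make the constant functions determined by the edges of $\Sigma$ into a pair of coupled upper and lower solutions. Indeed, taking the constant pair $(\bar u,\bar v)=(M_1,M_2)$, the spatial and temporal derivatives vanish, so the residual of \eqref{problem_1}-\eqref{problem_2} reduces to $-f_1(M_1,M_2)$ and $-f_2(M_1,M_2)$, both nonnegative by \eqref{condition_invariant}; symmetrically $(\underline u,\underline v)=(m_1,m_2)$ is a lower solution. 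Since $(u_0,v_0)$ and $(g_1,g_2)$ lie in $\Sigma$, the ordering $\underline u \le u_0 \le \bar u$ and the analogous inequalities hold for the data.

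First I would regularize the reaction term: because $\mathbf{f}$ is only assumed Lipschitz on $\Sigma$, extend it to a globally Lipschitz field on $\mathbb{R}^2$ that agrees with $\mathbf{f}$ on $\Sigma$ and preserves the boundary signs, and add a stabilizing shift $c\ge\lambda$ so that $u\mapsto f_1(u,v)+cu$ and $v\mapsto f_2(u,v)+cv$ become monotone in their respective arguments. This is the standard device that turns the reaction into an order-preserving nonlinearity. I would then run a monotone iteration: starting from the constant upper and lower solutions, define iterates by solving, at each step, the decoupled linear parabolic problems
\begin{align*}
	\partial_t \bar u^{(k+1)} - \nabla\cdot(\kappa_1\nabla \bar u^{(k+1)}) + c\,\bar u^{(k+1)} &= c\,\bar u^{(k)} + f_1(\bar u^{(k)},\bar v^{(k)}),
\end{align*}
and analogously for $\bar v^{(k+1)}$, $\underline u^{(k+1)}$, $\underline v^{(k+1)}$, with the right-hand sides coupled according to the type of quasimonotonicity (pairing like with like in the nondecreasing case, and upper with lower in the nonincreasing and mixed cases). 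Each step is a well-posed linear problem, and the scalar comparison principle for linear parabolic equations, together with the monotonicity secured by the shift, yields the chain $\underline u^{(0)}\le \underline u^{(1)}\le\cdots\le \bar u^{(1)}\le \bar u^{(0)}$ and similarly for the $v$-component. Monotonicity and boundedness give pointwise limits, and standard parabolic estimates upgrade these to weak limits solving \eqref{problem_1}-\eqref{problem_2}; by construction the limit is squeezed between $(m_1,m_2)$ and $(M_1,M_2)$, which is exactly the invariant region property.

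For uniqueness in $\Sigma$, I would take two solutions $(u,v),(\tilde u,\tilde v)\in\Sigma$, subtract the weak formulations, and test the $u$-difference equation with $u-\tilde u$ and the $v$-difference equation with $v-\tilde v$. The time terms produce $\tfrac12\frac{d}{dt}\bigl(\|u-\tilde u\|^2+\|v-\tilde v\|^2\bigr)$, the diffusion terms are nonnegative by coercivity of $\kappa_1,\kappa_2$, and the reaction difference is controlled by the Lipschitz hypothesis through $\lambda(|u-\tilde u|+|v-\tilde v|)$; a Gronwall argument with vanishing initial difference then forces the two solutions to coincide.

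The step I expect to be the genuine obstacle is the confinement itself, more than existence or uniqueness. The four inequalities in \eqref{condition_invariant} are intrinsically coupled: controlling $u$ near the face $u=M_1$ presupposes that $v$ has not already escaped $[m_2,M_2]$, so a naive scalar maximum-principle test of \eqref{problem_1} with $(u-M_1)^{+}$ does not close on its own. The monotone-iteration framework is what resolves this, because the ordering $\underline v^{(k)}\le v\le\bar v^{(k)}$ is maintained at every iterate and passes to the limit, keeping both components inside $\Sigma$ throughout. The remaining care is bookkeeping: the definition of coupled upper and lower solutions, and hence the coupling pattern of the iteration, differs among the nondecreasing, nonincreasing, and mixed cases, so the comparison step must be verified separately in each of the three regimes, all of which are nonetheless covered uniformly by \eqref{condition_invariant}.
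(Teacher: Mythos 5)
The paper does not actually prove this lemma: it is quoted as a known result, with the proof deferred entirely to Fife's monograph \cite{Fife1979}. So there is no in-paper argument to match against; what you have done is reconstruct, essentially correctly, the classical proof that such references contain. Your route --- extend $\mathbf{f}$ to a globally Lipschitz field, observe that the sign conditions \eqref{condition_invariant} make the constant pairs $(M_1,M_2)$ and $(m_1,m_2)$ coupled upper and lower solutions, run the shifted monotone iteration ($c\ge\lambda$ so that the reaction plus shift is order-preserving), squeeze the limit inside $\Sigma$, and finish uniqueness by an energy/Gronwall estimate --- is precisely the upper--lower solution machinery for quasimonotone parabolic systems, and it is the natural proof here. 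Note also that the face-wise form of \eqref{condition_invariant} (signs holding for \emph{all} values of the other variable) is exactly what lets a single construction cover all three quasimonotone regimes, which you exploit correctly. It is also worth remarking that this structure is mirrored in the paper's own discrete theory: the case analysis in Theorems \ref{thm1-fvm} and \ref{thm2-fvm} is a discrete shadow of the same comparison argument, and the iterative scheme \eqref{iter-1}-\eqref{iter-2} uses exactly your stabilizing shift $\lambda$.

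Two points deserve more care than your sketch gives them, though neither is fatal. First, in the nonincreasing and mixed quasimonotone cases the monotone iteration does not directly produce a solution: the limits of the upper and lower sequences form a pair of coupled \emph{quasisolutions} satisfying a cross-coupled system, and one must invoke the Lipschitz hypothesis (via a Gronwall-type estimate, the same one as in your uniqueness step) to show the two limits coincide and hence solve \eqref{problem_1}-\eqref{problem_2}; only in the nondecreasing case do the limits solve the system outright as maximal and minimal solutions. Second, testing the weak formulation with the difference $u-\tilde u$ is not immediately admissible given the low time regularity of $W_2^{1,1}(Q_T)$ solutions and the requirement $\phi(\cdot,T)=0$; this is repaired by a standard Steklov-averaging or mollification-in-time argument, but it should be acknowledged rather than passed over.
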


\begin{remark}
	(1)	 When $\mathbf{f}$ is quasimonotone  nondecreasing, the condition  \eqref{condition_invariant} is  equivalent to $f_ 1(m_1,m_2)\geq0, f_ 1(M_1,M_2)\leq0, 
	f_2(m_1,m_2)\geq0,   f_ 2(M_1,M_2)\leq0$; 
	
	(2) when $\mathbf{f}$ is quasimonotone  nonincreasing, 
	the condition  \eqref{condition_invariant} is  equivalent to $f_ 1(m_1,M_2)$ $ \geq0, f_ 1(M_1,m_2)\leq0, 
	f_2(M_1,m_2)\geq0,   f_ 2(m_1,M_2)\leq0$; 
	
	(3) when $\mathbf{f}$ is  mixed quasimonotone with nonincreasing $f_1$ and nondecreasing $f_2$, the   condition  \eqref{condition_invariant} is  equivalent to $f_ 1(m_1,M_2)\geq0, f_ 1(M_1,m_2)\leq0, 
	f_2(m_1,m_2)\geq0,   f_ 2(M_1,M_2)\leq0$;  
	
	(4) when $\mathbf{f}$ is  mixed quasimonotone with nondecreasing $f_1$ and nonincreasing $f_2$, the   condition  \eqref{condition_invariant} is  equivalent to $f_ 1(m_1,m_2)\geq0, f_ 1(M_1,M_2)\leq0, 
	f_2(M_1,m_2)\geq0,   f_ 2(m_1,M_2)\leq0$.
\end{remark}

\section{The  IRP-preserving finite volume scheme}
In this section, an IRP-preserving finite volume scheme is constructed to solve the coupled semilinear parabolic equations \eqref{problem_1}-\eqref{problem_2}.  The numerical method  uses DMP-preserving  finite volume method in space and backward Euler method in time, and the nonlinear IRP-preserving scheme is obtained.

In order to present the numerical scheme,  it is necessary to introduce  the following   notations, as indicated in Table. \ref{notation} and Fig. \ref{stencil}.  We suppose that  each polygonal cell is star-shaped with respect to its cell-center.
\begin{table}[htbp] 
	\centering  \caption{The notations.}   
	\begin{tabular}{l|l} 
		\hline   
		$K$ or $L$ & the cell  or  the cell-center 
		\\    
		$A$ or $B$ & the vertex of the cell-edge
		\\    $m(K)$ & the area of cell $K$ 
		\\    $h$ &  the maximum diameter of all cells
		\\    $\sigma$ & the  cell-edge 
		\\    $|\sigma|$ & the length of $\sigma$ 
		\\  
		$I$   & the midpoint of $\sigma$
		\\    $\mathcal{J}_{in}$ &  the set of cells \\    $\mathcal{J}_{out}$ & the set of cell-edges on $\partial \Omega$ \\    $\mathcal{J}$ & $\mathcal{J}=\mathcal{J}_{in}\cup \mathcal{J}_{out}$ \\    $\mathcal{E}_{K}$ & the set of cell-edges of $K$ \\    $\mathcal{E}$ & the set  of all  cell-edges 
		\\    
		$\mathbf{n}_{K,\sigma}$ & the unit outward normal vector on $\sigma$ of cell $K$ \\    $\boldsymbol{\tau}_{K I}$ (resp.  $\boldsymbol{\tau}_{L I}$) &  the unit tangential vector of $KI$ (resp. $LI$)
		\\  
		$\boldsymbol{\nu}_{K I}$ (resp. $\boldsymbol{\nu}_{L I}$) & the unit normal vector  of $KI$   (resp. $LI$) 
		\\    
		$\theta_{K,\sigma}$ (resp.  $\theta_{L,\sigma}$) &  the angle between  vectors $\boldsymbol{\tau}_{K I}$ and $\mathbf{n}_{K,\sigma}$ (resp. $\boldsymbol{\tau}_{L I}$ and $\mathbf{n}_{L,\sigma}$) 
		\\
		$t^{n+1}$ & $t^{n+1}= (n+1)\Delta t$, $\Delta t = \frac{T}{N}$
		\\
		$\mathcal{F}^{n+1}_{K, \sigma}$ (resp. $\mathcal{F}^{n+1}_{L, \sigma}$) & the continuous normal flux of $u$ on  edge $\sigma$ of the cell $K$  at $t^{n+1}$(resp. $L$) 
		\\
		$\tilde{\mathcal{F}}^{n+1}_{K, \sigma}$ (resp. $\tilde{\mathcal{F}}^{n+1}_{L, \sigma}$) & the continuous normal flux of $v$ on  edge $\sigma$ of the cell $K$  at $t^{n+1}$(resp. $L$) 
		\\$F^{n+1}_{K, \sigma}$ (resp. $F^{n+1}_{L, \sigma}$) & the discrete normal flux of $u$ on  edge $\sigma$ of the cell $K$ at $t^{n+1}$(resp. $L$) 
		\\$\tilde{F}^{n+1}_{K, \sigma}$ (resp. $\tilde{F}^{n+1}_{L, \sigma}$) & the discrete normal flux of $v$ on  edge $\sigma$ of the cell $K$ at $t^{n+1}$(resp. $L$) 
		\\
		$U_X^{n+1}(X=K,L,A,B,I,\cdots)$ & the discrete solution $U$ defined at the point $X$ at  $t^{n+1}$
		\\  
		$V_X^{n+1}(X=K,L,A,B,I,\cdots)$ & the discrete solution $V$ defined at the point $X$ at  $t^{n+1}$
		\\  
		\hline 
	\end{tabular}  \label{notation}
\end{table}%

\begin{figure}[!htbp]
	\centering
	\includegraphics[width=4in]{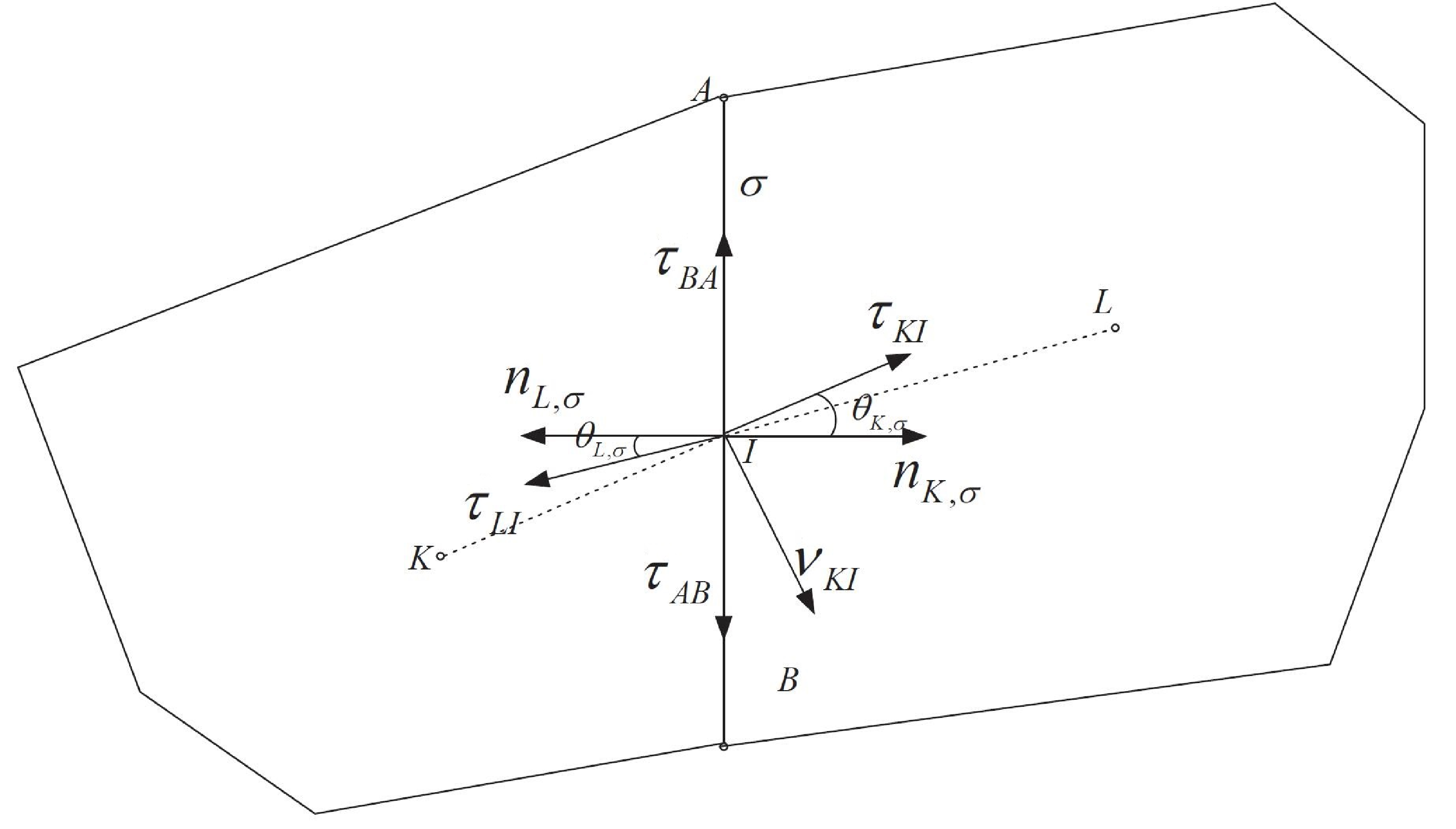}\label{stencil}
	\caption{The notations} 
\end{figure}

Integrating the diffusion parts of \eqref{problem_1}-\eqref{problem_2} over cell $K$ at $t^{n+1}$ and using Green's formula gives 
\begin{align*}
	-\int_K \nabla \cdot( \kappa_1 \nabla u )\big|_{t=t^{n+1}}\mathrm{d}\mathbf{x} =\sum_{\sigma \in \mathcal{E}_{K}} \mathcal{F}^{n+1}_{K, \sigma},
	\\
	-\int_K \nabla \cdot( \kappa_2\nabla v )\big|_{t=t^{n+1}}\mathrm{d}\mathbf{x} =\sum_{\sigma \in \mathcal{E}_{K}} \tilde{\mathcal{F}}^{n+1}_{K, \sigma},
\end{align*}
where $\mathcal{F}^{n+1}_{K, \sigma}=-\int_{\sigma} (\mathbf{n}_{K, \sigma}  \cdot \kappa_1^{\mathrm{T}}\nabla u)\big|_{t=t^{n+1}} \mathrm{d} l$ and $\tilde{\mathcal{F}}^{n+1}_{K, \sigma}=-\int_{\sigma} (\mathbf{n}_{K, \sigma}  \cdot \kappa_2^{\mathrm{T}}\nabla v)\big|_{t=t^{n+1}} \mathrm{d} l$   represent  the continuous normal flux on  edge $\sigma$ for $u$ and $v$, respectively. 

Next, we give the discretization of ${\mathcal{F}}^{n+1}_{K, \sigma}$.  The  discretization of  $\tilde{\mathcal{F}}^{n+1}_{K, \sigma}$  is similar, which is omitted. 
Without ambiguity, the superscript  $n+1$ is also omitted in the rest of this section. 
We employ the DMP-preserving numerical fluxes  proposed in \cite{Sheng2024},
where the numerical fluxes are both nonlinear and conservative. 
The  vector $\kappa_1^{\mathrm{T}} \mathbf{n}_{K, \sigma}$ can be  decomposed into $\kappa_1^{\mathrm{T}} \mathbf{n}_{K, \sigma}=-\alpha_K   {\tau}_{B A}+\beta_K   {\tau}_{K I}$,
where $\theta_{K,\sigma} \in (-\frac{\pi}{2},\frac{\pi}{2})$ denotes   the angle between  vectors $\boldsymbol{\tau}_{K I}$ and $\mathbf{n}_{K,\sigma}$, $ \alpha_K=\frac{1}{\cos \theta_{K, \sigma}} \boldsymbol{\nu}_{K I} \cdot\left(\kappa_1^{\mathrm{T}} \mathbf{n}_{K, \sigma}\right)$, $\beta_K=\frac{1}{\cos \theta_{K, \sigma}} \mathbf{n}_{K, \sigma} \cdot\left(\kappa_1^{\mathrm{T}} \mathbf{n}_{K, \sigma}\right)$,
$\alpha_L=\frac{1}{\cos \theta_{L, \sigma}} \boldsymbol{\nu}_{L I}\cdot\left(\kappa_1^{\mathrm{T}} \mathbf{n}_{L, \sigma}\right)$, $\beta_L=\frac{1}{\cos \theta_{L, \sigma}} \mathbf{n}_{L, \sigma}\cdot\left(\kappa_1^{\mathrm{T}} \mathbf{n}_{L, \sigma}\right)$.
Using Taylor expansion, we obtain
\begin{align}
	\begin{split}
		&	\mathcal{F}_{K, \sigma}=\alpha_{K, \sigma}(u(A)-u(B))-\frac{|A B|}{|I K|} \beta_{K, \sigma}(u(I)-u(K))+O(h^2),  \label{nine-point-flux-K-1}
		\\
		&	\mathcal{F}_{L, \sigma}=\alpha_{L, \sigma}(u(B)-u(A))-\frac{|A B|}{|I L|} \beta_{L, \sigma}(u(I)-u(L))+O(h^2). 
	\end{split}
\end{align}
$\alpha_{K, \sigma}$,  $\alpha_{L, \sigma}$,  $\beta_{K, \sigma}$ and   $\beta_{L, \sigma}$ are the  integral means of $\alpha_K$,  $\alpha_{L}$, $\beta_K$ and $\beta_L$ on edge $\sigma$, respectively, 

With the help of the continuity of normal flux $\mathcal{F}_{K, \sigma}+\mathcal{F}_{L, \sigma}=0$ and omitting the $O(h^2)$ of $\mathcal{F}_{K, \sigma}$ and $\mathcal{F}_{L, \sigma}$  derives the expression of $u(I)$, and then substituting $u(I)$ into \eqref{nine-point-flux-K-1}  leads to 
\begin{align}
	\begin{split}
		&	\mathcal{F}_{K, \sigma} =\tau_{\sigma}(u(K)-u(L))+\tau_{\sigma} D_{\sigma}(u(A)-u(B)+O(h^2), \label{nine-point-flux-K}
		\\
		&	\mathcal{F}_{L, \sigma} =\tau_{\sigma}(u(L)-u(K))+\tau_{\sigma} D_{\sigma}(u(B)-u(A)+O(h^2), 
	\end{split}
\end{align}
where $\tau_{\sigma}=\frac{|A B|}{\frac{|I K|}{\beta_{K, \sigma}}+\frac{|I L|}{\beta_{L, \sigma}}}$, $D_{\sigma}=\frac{|I K| \alpha_{K, \sigma}}{|A B| \beta_{K, \sigma}}+\frac{|I L| \alpha_{L, \sigma}}{|A B| \beta_{L, \sigma}}$.
It can be seen that  $\tau_{\sigma} > 0$ since $\beta_K >0$, but $D_{\sigma}$ may be negative where its sign depends on the diffusion tensor    and   mesh cell-geometry.

Using the  second-order  method in  \cite{Sheng2008} to approximate the vertex unknowns $u(A)$ and $u(B)$,
it  follows that
\begin{align*}
	&u(A) =\sum_{i=1}^{J_A} \omega_{A_i}u({K_{A_i}}),
	\\
	&u(B) =\sum_{i=1}^{J_B} \omega_{B_i}u({K_{B_i}}),
\end{align*}
where $J_A$ (resp.  $J_B$) denotes  the number   of cells  involved in the approximation  of  $u(A)$  (resp.  $u(B)$). The  weighted coefficients  satisfy $\sum_{i=1}^{J_A} \omega_{A_i} =1$ and $\sum_{i=1}^{J_B} \omega_{B_i} =1$, and are not limited to be nonnegative.

Substituting the expressions of $u(A)$ and $u(B)$ into \eqref{nine-point-flux-K} yields  the following expressions 
\begin{align*}
	&	\mathcal{F}_{K, \sigma} =\tau_{\sigma}(u(K)-u(L))+\tau_{\sigma} D_{\sigma}\left(\sum_{i=1}^{J_A} \omega_{A_i}u({K_{A_i}})-\sum_{i=1}^{J_B} \omega_{B_i}u({K_{B_i}})\right) +O(h^2), 
	\\
	&	\mathcal{F}_{L, \sigma} =\tau_{\sigma}(u(L)-u(K))-\tau_{\sigma} D_{\sigma}\left(\sum_{i=1}^{J_A} \omega_{A_i}u({K_{A_i}})-\sum_{i=1}^{J_B} \omega_{B_i}u({K_{B_i}})\right)  +O(h^2). 
\end{align*}
Denote by  $F_{K, \sigma}^{NP}$ and $F_{L, \sigma}^{NP}$ the  numerical fluxes  of  the nine-point scheme   \cite{Sheng2008}, we have
\begin{align}
	\begin{split}
		&F_{K, \sigma}^{NP}=\tau_{\sigma}(U_K-U_L)+ \Delta_{\sigma}, \label{nine-point-flux-K-3}
		\\
		&F_{L, \sigma}^{NP} =\tau_{\sigma}(U_L-U_K)- \Delta_{\sigma},  
	\end{split}
\end{align}
where  $\displaystyle \Delta_{\sigma} = \tau_{\sigma} D_{\sigma}\left(\sum_{i=1}^{J_A} \omega_{A_i}U_{{K_{A_i}}}-\sum_{i=1}^{J_B} \omega_{B_i}U_{{K_{B_i}}}\right)$, $U$ represents the discrete solution.

Denote $U_{K_{1}}$ and $U_{K_{2}}$ such that 
\begin{align}
	\begin{split}
		&U_{K_{1}}=\min _{\bar{K} \in \mathcal{J}_{K}} U_{\bar{K}}, \label{U_K1}
		\\
		&U_{K_{2}}=\max _{\bar{K} \in \mathcal{J}_{K}} U_{\bar{K}},
	\end{split}
\end{align}
where $\mathcal{J}_{K}$ denotes  the set of cells that have    common vertices with  cell $K$ except for  $K$.

In the numerical algorithm, we set two small positive constants $\varepsilon_0$ and $\varepsilon_1$, where $\varepsilon_0, \varepsilon_1 \leq Ch^2$.  We set $\varepsilon_0=\varepsilon_1=10^{-10}$ in our numerical experiments.  

If $|\Delta_{\sigma}|\leq \varepsilon_0$,  we define   the numerical fluxes as follows:
\begin{align*}
	&F_{K, \sigma}  =\tau_{\sigma}\left(U_{K}-U_{L}\right),
	\\
	&F_{L, \sigma}  =\tau_{\sigma}\left(U_{L}-U_{K}\right).
\end{align*}
If $|\Delta_{\sigma}|> \varepsilon_0$,  then the construction contains two cases:

$\mathbf{Case~1}$.   There exist $U_{K'}$ and $U_{L'}$ such  that
\begin{align}
	\begin{split}
		&\Delta_{\sigma}(U_{K}-U_{K^{\prime}})>0, \label{case_2_1_a}
		\\
		&\Delta_{\sigma}(U_{L^{\prime}}-U_{L})>0, 
	\end{split}
\end{align}
where $U_{K'}$ and $U_{L'}$ are the cell-centered unknowns surrounding $K$ and $L$, respectively.
$U_{K'}$   and $U_{L'}$ can be taken  as $U_{K_{1}}$ or  $U_{K_{2}}$ in   \eqref{U_K1}.
In this case, the numerical fluxes  are  defined as 
\begin{align*}
	&F_{K, \sigma}  = \tau_{\sigma}\left(U_{K}-U_{L}\right) + \eta_{K,\sigma}(U_{K}-U_{K^{\prime}}), \\
	&F_{L, \sigma}  =\tau_{\sigma}\left(U_{L}-U_{K}\right)+ \eta_{L,\sigma} (U_{L}-U_{L^{\prime}}),
\end{align*}
where $\eta_{K,\sigma} = \frac{\Delta_{\sigma}}{U_{K}-U_{K^{\prime}}}$ and $\eta_{L,\sigma} = \frac{\Delta_{\sigma}}{U_{L^{\prime}}-U_{L}}$.  It can be obtained that the nonlinear coefficients   $\eta_{K,\sigma}>0$ and $\eta_{L,\sigma} >0$  since  $\Delta_{\sigma}$ and $U_{K}-U_{K^{\prime}}$, $U_{L^{\prime}}-U_{L}$  are of the same sign in this case.

$\mathbf{Case~2}$.  There do not  exist $U_{K'}$ and $U_{L'}$ such  that \eqref{case_2_1_a} hold. It is equivalent to
\begin{align}
	\Delta_{\sigma}(U_{K}-U_{K^{\prime}}) \leq  0 \label{case2-1}
\end{align}
for any $K^{\prime} \in \mathcal{J}_K$, or
\begin{align}
	\Delta_{\sigma}(U_{L^{\prime}}-U_{L}) \leq  0 \label{case2-2}
\end{align}
for any $L^{\prime} \in \mathcal{J}_L$. 
It implies that $U$ reaches its maximum or minimum on  cell $K$ or $L$, and from \eqref{case2-1} and  \eqref{case2-2} we have
\begin{align}
	\Delta_{\sigma}(U_{K}-U_{L}) \leq  0. \label{case_2_2_eq1}
\end{align}

Rewrite the linear  numerical fluxes \eqref{nine-point-flux-K-3}    as
\begin{align*}
	&F_{K, \sigma}^{NP}   = (1-\gamma_0)\tau_{\sigma}\left(U_{K}-U_{L}\right)+\Delta_{\sigma} + \gamma_0\tau_{\sigma}(U_K-U_L),
	\\
	&F_{L, \sigma} ^{NP} = (1-\gamma_0)\tau_{\sigma}\left(U_{L}-U_{K}\right)- \Delta_{\sigma}+ \gamma_0\tau_{\sigma}(U_L-U_K),
\end{align*}
where  $\gamma_0$ is a nonlinear coefficient  determined later.  To preserve the IRP, the numerical flux needs to satisfy  the DMP  structure, i.e., the numerical flux should  be  the convex combinations 
of $U_K-U_{K_j}$.   To  guarantee  $1-\gamma_0 >0$,   the  coefficient $\gamma_0$ should  satisfy
\begin{align*}
	0 \leq \gamma_0 \leq 1-\varepsilon_1.
\end{align*}

In this case, we define the final nonlinear numerical fluxes:
\begin{align*}
	&F_{K, \sigma}  = (1-\gamma_0)\tau_{\sigma}\left(U_{K}-U_{L}\right),
	\\
	&F_{L, \sigma}  = (1-\gamma_0)\tau_{\sigma}\left(U_{L}-U_{K}\right).
\end{align*}
If $|U_K-U_L|=0$, we set $\gamma_0 = 1-\varepsilon_1$, 
otherwise, we set 
$$ \gamma_{0}=\left\{
\begin{aligned}
	&\frac{-\Delta_{\sigma}}{\tau_{\sigma}\left(U_{K}-U_{L}\right)},&& \quad \text{if~}  \frac{-\Delta_{\sigma}}{\tau_{\sigma}\left(U_{K}-U_{L}\right)} \leq 1-\varepsilon_1,
	\\
	& 1-\varepsilon_1,&& \quad \text{else}.
\end{aligned}
\right.
$$
The inequality  \eqref{case_2_2_eq1} guarantees the nonnegativity of $\gamma_{0}$.

Up to now we have constructed the numerical flux with DMP-preserving structure, where the numerical flux 
is conservative and nonlinear.

	
	To discretize the  time  derivative, we utilize the Backward Euler method.  Hence, we obtain the
	nonlinear finite volume scheme \eqref{fvm-1}-\eqref{fvm-2}: 
	\begin{align}
		m(K) \frac{U_{K}^{n+1}-U_{K}^{n}}{\Delta t}+\sum_{\sigma \in \mathcal{E}_{K}}F_{K, \sigma}^{n+1}&=m(K) f_{1}(U_{K}^{n+1}, V_{K}^{n+1}) ,  & & \forall K \in \mathcal{J}_{in}, \label{fvm-1}
		\\
		m(K) \frac{V_{K}^{n+1}-V_{K}^{n}}{\Delta t}+\sum_{\sigma \in \mathcal{E}_{K}} \tilde{F}_{K, \sigma}^{n+1}&=m(K) f_{2}(U_{K}^{n+1}, V_{K}^{n+1}),  & & \forall K \in \mathcal{J}_{in},
		\\
		U_K^{n+1} &= g_1(K,t^{n+1}),  & & \forall K \in \mathcal{J}_{out},
		\\
		V_K^{n+1} &= g_2(K,t^{n+1}),   & & \forall K \in \mathcal{J}_{out},
		\\
		U_K^0 &=   u_0(K), & &  \forall K \in \mathcal{J}_{in}\cup \mathcal{J}_{out},
		\\
		V_K^0&=  v_0(K),   & &  \forall K \in \mathcal{J}_{in}\cup \mathcal{J}_{out}.   \label{fvm-2}
	\end{align}

	We  show   that the finite volume scheme \eqref{fvm-1}-\eqref{fvm-2} can preserve the IRP for the  semilinear parabolic systems with three basic  types of quasimonotone functions, which are shown in    Theorem \ref{thm1-fvm} and Theorem \ref{thm2-fvm}.
	\begin{theorem}\label{thm1-fvm}
		Assume that $\mathbf{f}=\left(f_1, f_2\right)$  is mixed quasimonotone and Lipschitz continuous in $\Sigma =[m_1,M_1]\times [m_2,M_2]$ and satisfies   \eqref{condition_invariant}, the initial and boundary  conditions   satisfy  $(u_0,v_0), (g_1,g_2) \in \Sigma$. When the time-step size satisfies   $\Delta t < \dfrac{1}{\lambda}$,
		the finite volume scheme \eqref{fvm-1}-\eqref{fvm-2} has a solution satisfying 
		\begin{eqnarray}\label{theorem_eq_1}
			(U_K^n,V_K^n)\in \Sigma, \quad \forall  K \in  \mathcal{J},\quad   0\leq  n\leq  N.
		\end{eqnarray}
	\end{theorem}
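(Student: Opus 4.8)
The plan is to argue by induction on the time level $n$. The base case $n=0$ is immediate, since $(U_K^0,V_K^0)=(u_0(K),v_0(K))\in\Sigma$, and at every level the boundary cells satisfy $(U_K^{n+1},V_K^{n+1})=(g_1(K,t^{n+1}),g_2(K,t^{n+1}))\in\Sigma$ by hypothesis. Assuming $(U_K^n,V_K^n)\in\Sigma$ for all $K\in\mathcal J$, I would split the inductive step into (i) an \emph{a priori} statement that \emph{every} solution of the level-$(n+1)$ system \eqref{fvm-1}--\eqref{fvm-2} lies in $\Sigma$, and (ii) the \emph{existence} of such a solution. Part (i) carries the analytic content; part (ii) then follows from a fixed-point/degree argument that uses (i) as the confining estimate.

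For part (i) the key structural fact is that every branch of the flux construction produces a discrete-maximum-principle form: in each of the three cases $|\Delta_\sigma|\le\varepsilon_0$, Case~1 and Case~2, the flux $F_{K,\sigma}^{n+1}$ is a sum of terms $(\text{nonnegative coefficient})\times(U_K^{n+1}-U_{\bar K}^{n+1})$ with $\bar K\in\mathcal J_K\cup\mathcal J_{out}$, because $\tau_\sigma>0$, $1-\gamma_0\ge\varepsilon_1>0$ and $\eta_{K,\sigma}>0$. Hence $\sum_{\sigma\in\mathcal E_K}F_{K,\sigma}^{n+1}=\sum_j\Lambda_{K,j}\,(U_K^{n+1}-U_{K_j}^{n+1})$ with all $\Lambda_{K,j}\ge0$, and the analogous identity holds for $\tilde F$ in the $V$-unknowns. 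I would measure the violation of $\Sigma$ by
\[
\delta=\max_{K\in\mathcal J}\max\{(U_K^{n+1}-M_1)_+,\,(m_1-U_K^{n+1})_+,\,(V_K^{n+1}-M_2)_+,\,(m_2-V_K^{n+1})_+\},
\]
assume $\delta>0$, and pick a cell $K_0$ attaining it. If the maximum is of the first type, $K_0$ is a global maximizer of $U^{n+1}$ and is interior (boundary values are $\le M_1<U_{K_0}^{n+1}$), so all $U_{K_0}^{n+1}-U_{K_j}^{n+1}\ge0$ and the flux sum is nonnegative; discarding it in \eqref{fvm-1} and using $U_{K_0}^{n}\le M_1$ yields $\tfrac{\delta}{\Delta t}\le\tfrac{1}{\Delta t}(U_{K_0}^{n+1}-U_{K_0}^{n})\le f_1(U_{K_0}^{n+1},V_{K_0}^{n+1})$.

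The delicate point, where both quasimonotonicity and the restriction $\Delta t<1/\lambda$ enter, is bounding the right-hand side. Comparing at fixed $V$ and using Lipschitz continuity in the first argument gives $f_1(U_{K_0}^{n+1},V_{K_0}^{n+1})\le f_1(M_1,V_{K_0}^{n+1})+\lambda\,\delta$, after which it remains to show $f_1(M_1,V_{K_0}^{n+1})\le0$. If $V_{K_0}^{n+1}\in[m_2,M_2]$ this is exactly \eqref{condition_invariant}; if $f_1$ is quasimonotone nondecreasing in its second argument it also holds for $V_{K_0}^{n+1}\le M_2$ via $f_1(M_1,V_{K_0}^{n+1})\le f_1(M_1,M_2)\le0$, and if nonincreasing it holds for $V_{K_0}^{n+1}\ge m_2$ via $f_1(M_1,V_{K_0}^{n+1})\le f_1(M_1,m_2)\le0$. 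In all these cases one gets $\tfrac{\delta}{\Delta t}\le\lambda\,\delta$, and $\Delta t<1/\lambda$ forces $\delta\le0$, a contradiction. The genuinely coupled sub-cases, in which $V_{K_0}^{n+1}$ also leaves $[m_2,M_2]$ on the side against which the monotonicity of $f_1(M_1,\cdot)$ points, are the main obstacle: there the favourable monotone bound is lost and an extra Lipschitz cross-term proportional to the second violation appears, so one cannot treat the four extrema separately and must instead assemble them into a simultaneous estimate and exploit $\tfrac{1}{\Delta t}-\lambda>0$ to force \emph{all} violations to vanish at once; the lower bounds $m_1,m_2$ and the $V$-equation are handled symmetrically. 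I expect this coupled estimate to be the hardest step.

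For part (ii), existence follows from Brouwer's theorem (equivalently, a topological degree argument) together with the bound of part~(i). The flux construction is continuous in $(\mathbf U,\mathbf V)$ — the regularizations $\varepsilon_0,\varepsilon_1$ remove the only potential discontinuities and the selections $U_{K'},U_{L'}$ may be made continuously — so the level-$(n+1)$ operator is continuous. Introducing the homotopy that scales both the reaction and the correction term $\Delta_\sigma$ by $s\in[0,1]$, at $s=0$ the system reduces to a linear, uniquely solvable two-point-flux diffusion step of nonzero degree, while for every $s$ the discrete-maximum-principle structure and the scaled sign conditions keep all zeros inside $\Sigma$ by the argument of part~(i). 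Invariance of the degree then yields a zero at $s=1$, i.e. a solution of \eqref{fvm-1}--\eqref{fvm-2} lying in $\Sigma$, which closes the induction.
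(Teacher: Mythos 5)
Your high-level architecture (induction in $n$, an a priori confinement estimate at extremal cells using the DMP structure of the fluxes, then existence via a degree argument that uses the confinement) parallels the paper's, but part (i) has a genuine gap, and it is precisely the step you yourself flag as unresolved: the coupled sub-cases. The paper's proof handles them with a device absent from your proposal: before estimating anything, it replaces $(f_1,f_2)$ by prolongation (clamped) functions, $\bar f_1(u_1,u_2)=f_1\bigl(u_1,\min\{M_2,\max\{m_2,u_2\}\}\bigr)$ and symmetrically for $\bar f_2$ in its first argument, and works with the auxiliary scheme driven by $(\bar f_1,\bar f_2)$. This matters twice. First, your estimates evaluate $f_1,f_2$ and invoke their Lipschitz continuity and quasimonotonicity at points such as $(U_{K_0}^{n+1},V_{K_0}^{n+1})$ with $U_{K_0}^{n+1}>M_1$ or $V_{K_0}^{n+1}<m_2$, i.e.\ outside $\Sigma$; but every hypothesis on $\mathbf f$ (definition, Lipschitz bound, monotonicity, the sign conditions \eqref{condition_invariant}) is given only in $\Sigma$, so the quantity $f_1(U_{K_0}^{n+1},V_{K_0}^{n+1})$ you manipulate is not even well defined, and the scheme \eqref{fvm-1}--\eqref{fvm-2} has no meaning for a putative solution outside $\Sigma$. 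Second, and decisively, the clamping makes the bad coupled case disappear instead of requiring a "simultaneous estimate": since the second argument of $\bar f_1$ is clamped to $[m_2,M_2]$, the sign condition \eqref{condition_invariant} gives $\bar f_1(M_1,v)\le 0$ for \emph{every} $v\in\mathbb{R}$, so at a maximizing cell one always has $(\bar U_{\max}-M_1)/\Delta t\le \bar f_1(\bar U_{\max},\bar V)-\bar f_1(M_1,\bar V)\le\lambda(\bar U_{\max}-M_1)$, and each of the four extrema is dispatched separately under $\Delta t<1/\lambda$. Once $(\bar U^{m+1},\bar V^{m+1})\in\Sigma$ is established, the prolonged functions coincide with the original ones there, so the same vector solves \eqref{fvm-1}--\eqref{fvm-2}, which is how the paper closes the induction.

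By contrast, the simultaneous estimate you sketch for the coupled case cannot reach the stated threshold. Even granting a globally defined, globally Lipschitz $\mathbf f$, your plan produces, in the worst case, the cyclic chain $(1/\Delta t-\lambda)\delta_1\le\lambda\delta_4$, $(1/\Delta t-\lambda)\delta_4\le\lambda\delta_2$, $(1/\Delta t-\lambda)\delta_2\le\lambda\delta_3$, $(1/\Delta t-\lambda)\delta_3\le\lambda\delta_1$ (each bad sub-case injects a Lipschitz cross-term proportional to another violation, exactly as you describe). This chain yields a contradiction only when $1/\Delta t-\lambda>\lambda$, i.e.\ $\Delta t<1/(2\lambda)$ --- compare Theorem \ref{thm2-fvm}, where precisely such a cyclic estimate forces the stronger restriction --- and therefore falls short of the $\Delta t<1/\lambda$ claimed in Theorem \ref{thm1-fvm}. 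So the missing prolongation is not a technicality: it is simultaneously what legitimizes evaluating the reaction terms off $\Sigma$ and what eliminates the cross-terms, and without it your outline cannot be completed as stated.
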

	\begin{proof}
		This theorem is proved by induction.  For  mixed quasimonotone reaction-diffusion systems, we only   prove the  theorem in   the case that  $f_1$ is quasimonotone nonincreasing and $f_2$ is quasimonotone nondecreasing, the other case   can be proved similarly.
		Since   $(u_0,v_0)$, $(g_1,g_2) \in \Sigma$, it is obvious that \eqref{theorem_eq_1} holds for $n=0$.
		Suppose that \eqref{theorem_eq_1} holds for any $n \leq m$, if \eqref{theorem_eq_1} is proved for $n = m+1$,  where $m\geq 0$, then the theorem follows immediately.
		
		First,   construct the prolongation functions $\bar f_1(u_1,u_2)$ and $\bar f_2(u_1,u_2)$ as follows. For any $u_1 \in \mathbb{R}$, define 
		$$ \bar{f}_1(u_1,u_2) = \left\{
		\begin{aligned}
			& f_1(u_1,m_2),    \quad  &&\text{if~}  u_2<m_1, \\
			& f_1(u_1,u_2),    \quad  &&\text{if~} m_2 \leq  u_2\leq  M_2, \\
			&  f_1(u_1,M_2),     \quad  &&\text{if~}   u_2> M_2.
		\end{aligned}
		\right.
		$$
		For any $u_2 \in \mathbb{R}$, define 
		$$ \bar{f}_2(u_1,u_2) = \left\{
		\begin{aligned}
			& f_2(m_1,u_2),    \quad  &&\text{if~}  u_1<m_1, \\
			& f_2(u_1,u_2),    \quad  &&\text{if~} m_1 \leq  u_1\leq  M_1, \\
			&  f_2(M_1,u_2),     \quad  &&\text{if~}   u_1> M_1.
		\end{aligned}
		\right.
		$$
		Note that $(\bar f_1, \bar f_2)$ is also mixed quasimonotone  in $\mathbb{R}^2$.  It is easy to check that $\bar f_1$ and  $\bar f_2$ are Lipschitz continuous  in $\mathbb{R}^2$ and $\lambda$ is  still the  Lipschitz constant of $\bar f_1$ and  $\bar f_2$, and it also holds that
		\begin{align*}
			\bar f_1(M_1,m_2) \leq  0 \leq  \bar  f_1(m_1,M_2),
			\\
			\bar f_2(M_1,M_2) \leq 0 \leq \bar f_2(m_1,m_2).
		\end{align*}

		Next,  for given $(U^m,V^m)\in \Sigma$, denote $(\bar{U}^{m+1},\bar{V}^{m+1})$  the solution of   following problem:
		\begin{align}
			m(K) \frac{\bar U_{K}^{m+1}-  U_{K}^{m}}{\Delta t}+  \sum_{\sigma \in \mathcal{E}_{K}}F_{K, \sigma}^{m+1}&=m(K) \bar f_{1}(\bar U_{K}^{m+1}, \bar V_{K}^{m+1}) ,  & & \forall K \in \mathcal{J}_{in}, \label{P1_1}
			\\
			m(K) \frac{\bar V_{K}^{m+1}- V_{K}^{m}}{\Delta t}+ \sum_{\sigma \in \mathcal{E}_{K}}{ \tilde{F}}_{K, \sigma}^{m+1}&=m(K)\bar  f_{2}(\bar U_{K}^{m+1}, \bar V_{K}^{m+1}), & & \forall K \in \mathcal{J}_{in},
			\\
			\bar 	U_K^{m+1} &= g_1(K,t^{m+1}),  & & \forall K \in \mathcal{J}_{out},
			\\
			\bar 	V_K^{m+1} &= g_2(K,t^{m+1}),   & & \forall K \in \mathcal{J}_{out}, \label{P1_2}
		\end{align}
		where ${F}_{K,\sigma}^{m+1}$ and $ { \tilde{F}}_{K, \sigma}^{m+1}$  are  constructed by $\bar{U}_K^{m+1}$ and $\bar{V}_K^{m+1}$.

		Denote 	 $K_{\max,U}$, $K_{\min,U}$, $K_{\max,V}$, $K_{\min,V}$ such that
		\begin{align*}
			& \bar U_{K_{\max,U}}^{m+1} = \max_{K \in  \mathcal{J}} \bar U^{m+1}_K, \quad \bar U_{K_{\min,U}}^{m+1} = \min_{K \in  \mathcal{J}} \bar U^{m+1}_K,
			\\
			& \bar V_{K_{\max,V}}^{m+1} = \max_{K \in  \mathcal{J}} \bar V^{m+1}_K, \quad \bar  V_{K_{\min,V}}^{m+1} = \min_{K \in  \mathcal{J}} \bar V^{m+1}_K.
		\end{align*}
		
		If \eqref{theorem_eq_1} does not hold for $n=m+1$, then one of the following cases holds:
		\begin{align*}
			&\text{Case~1:}\quad \bar U_{K_{\max,U}}^{m+1}  >M_1. \quad
			\text{Case~2:}\quad  \bar  U_{K_{\min,U}}^{m+1} <m_1.
			\\
			&\text{Case~3:}\quad  \bar V_{K_{\max,V}}^{m+1}  >M_2. \quad
			\text{Case~4:}\quad  \bar  V_{K_{\min,V}}^{m+1} <m_2.
		\end{align*}
		We shall prove that none of the above cases hold true by contradiction. 
		Let us suppose Case 1 holds.   Since $ \bar U^{m+1}$ attains its maximum on $K_{\max,U}$ and the numerical flux has DMP-preserving structure, it holds that  $F_{K_{\max,U}, \sigma}^{m+1} \leq 0$ for any $\sigma \in  \mathcal{E}_{K_{\max,U}}$.  From  $   U_{K_{\max,U}}^{m} \in [m_1,M_1]$  and \eqref{fvm-1}, we can get 
		\begin{align}
			\frac{ \bar U_{K_{\max,U}}^{m+1}-M_1}{\Delta t} \leq	\frac{ \bar U_{K_{\max,U}}^{m+1}-   U_{K_{\max,U}}^{m}}{\Delta t}  \leq \bar f_{1}( \bar U_{K_{\max,U}}^{m+1}, \bar V_{K_{\max,U}}^{m+1}). \label{the1_eq1}
		\end{align}

		(1)   $ \bar V_{K_{\max, U}}^{m+1}\geq m_{2}$.
		
		Since $\bar f_1$ is nonincreasing with respect to $v$, we have $\bar f_{1}(M_{1}, \bar V_{K_{\max,U}}^{m+1})\leq \bar f_{1}(M_{1}, m_{2}) \leq 0$. According to \eqref{the1_eq1} and Lipschitz continuity of   $\bar f_1$, we have 
		\begin{align}
			\begin{aligned}
				\frac{ \bar U_{K_{\max,U}}^{m+1}-M_1}{\Delta t}  \leq \bar  f_{1}( \bar U_{K_{\max,U}}^{m+1}, \bar V_{K_{\max,U}}^{m+1})  -\bar f_{1}(M_{1}, \bar V_{K_{\max,U}}^{m+1})
				\leq \lambda  ( \bar U_{K_{\max,U}}^{m+1}-M_{1}  ).  
			\end{aligned}\label{the1_eq2}
		\end{align}
		When $\Delta t <  \dfrac{1}{\lambda}$, \eqref{the1_eq2} implies that $ \bar U_{K_{\max,U}}^{m+1} \leq M_1$, which contradicts with the assumption that  $ \bar U_{K_{\max,U}}^{m+1} > M_1$.
		
		(2) $ \bar V_{K_{\max, U}}^{m+1}< m_{2}$.
		
		From $ \bar V_{K_{\max, U}}^{m+1}<m_{2}$, 
		we have $ \bar V_{K_{\min, V}}^{m+1}< m_{2}$. Since $\bar{V}^{m+1}$ attains its minimum on $K_{\min,V}$,  we can obtain
		\begin{align}
			\frac{ \bar V_{K_{\min,V}}^{m+1}-m_2}{\Delta t} \geq	\frac{ \bar V_{K_{\min,V}}^{m+1}- \bar V_{K_{\min,V}}^{m}}{\Delta t}  \geq \bar  f_{2}( \bar U_{K_{\min,V}}^{m+1}, \bar V_{K_{\min,V}}^{m+1}). \label{the1_eq3}
		\end{align}
		
		(2.1)  $ \bar U_{K_{\min, V}}^{m+1}\geq m_{1}$.
		
		Since $\bar f_2$ is nondecreasing with respect to $u$, we have $ \bar f_{2}( \bar U_{K_{\min, U}}^{m+1},m_2)\geq  \bar f_{2}(m_{1}, m_{2}) \geq 0$.  Similar to the derivation of \eqref{the1_eq2}, it follows that
		\begin{align}
			\begin{aligned}
				\frac{ \bar V_{K_{\min,V}}^{m+1}-m_2}{\Delta t}  \geq \bar  f_{2}( \bar U_{K_{\min,V}}^{m+1}, \bar V_{K_{\min,V}}^{m+1}) -\bar  f_{2}( \bar U_{K_{\min, U}}^{m+1},m_2) \geq \lambda ( \bar V_{K_{\min,V}}^{n+1}-m_{2}).
			\end{aligned} \label{the1_eq4}
		\end{align}
		When  $\Delta t <  \dfrac{1}{\lambda}$,  \eqref{the1_eq4} implies $ \bar V_{K_{\min,V}}^{m+1}\geq m_2$, which contradicts with  $ \bar V_{K_{\min, V}}^{m+1}< m_{2}$.
		
		(2.2) $ \bar U_{K_{\min, V}}^{m+1}< m_{1}$.
		
		The  assumption 	$ \bar U_{K_{\min, V}}^{m+1}< m_{1}$ implies  $ \bar U_{K_{\min, U}}^{m+1}< m_{1}$.	Since $ \bar U^{m+1}$ attains its minimum on $K_{\min,U}$,  we can obtain
		\begin{align}
			\frac{ \bar U_{K_{\min,U}}^{m+1}-m_1}{\Delta t} \geq	\frac{ \bar U_{K_{\min,U}}^{m+1}-U_{K_{\min,U}}^{m}}{\Delta t}  \geq \bar  f_{1}( \bar U_{K_{\min,U}}^{m+1}, \bar V_{K_{\min,U}}^{m+1}). \label{the1_eq5}
		\end{align}
		
		(2.2.1)	 $ \bar V_{K_{\min, U}}^{m+1}\leq M_{2}$.
		
		Since $\bar f_1$ is nonincreasing with respect to $v$, we have $\bar f_1(m_1, \bar V_{K_{\min, U}}^{m+1})\geq \bar f_1(m_1,M_2)\geq0$. 
		According to \eqref{the1_eq5}  and the Lipschitz  continuity  of $\bar f_1$, we have
		\begin{align*}
			\begin{aligned}
				\frac{ \bar U_{K_{\min,U}}^{m+1}-m_1}{\Delta t}  \geq \bar f_1( \bar U_{K_{\min, U}}^{m+1}, \bar V_{K_{\min, U}}^{m+1})  -\bar f_1(m_1, \bar V_{K_{\min, U}}^{m+1}) 
				\geq \lambda (\bar U_{K_{\min, U}}^{m+1} -m_1),
			\end{aligned}
		\end{align*}
		which means that $ \bar U_{K_{\min, U}}^{m+1} \geq m_1$ when $\Delta t < \dfrac{1}{\lambda}$. This 
		contradicts with   $ \bar U_{K_{\min, U}}^{m+1}< m_{1}$.
		
		(2.2.2) $ \bar V_{K_{\min, U}}^{m+1}> M_{2}$.
		
		The assumption 	$ \bar V_{K_{\min, U}}^{m+1}> M_{2}$ implies $ \bar V_{K_{\max, V}}^{m+1}> M_{2}$.  Since $\bar{V}^{m+1}$ attains its maximum  on $K_{\max,V}$,  we can obtain
		\begin{align}
			\frac{ \bar V_{K_{\max,V}}^{m+1}-M_2}{\Delta t} \leq	\frac{ \bar V_{K_{\max,V}}^{m+1}-V_{K_{\max,V}}^{m}}{\Delta t}  \leq \bar  f_{2}( \bar U_{K_{\max,V}}^{m+1}, \bar V_{K_{\max,V}}^{m+1}). \label{the1_eq6}
		\end{align}
		
		(2.2.2.1) $ \bar U_{K_{\max, V}}^{m+1}\leq M_{1}$.
		
		Since $\bar f_2$ is nondecreasing with respect to $u$, we have $\bar f_2( \bar U_{K_{\max, V}}^{m+1},M_2)\leq \bar f_2(M_1,M_2)\leq 0$.  According  to  \eqref{the1_eq6} and the Lipschitz  continuity of $\bar f_2$, we have
		\begin{align}
			\begin{aligned}
				\frac{ \bar V_{K_{\max,V}}^{m+1}-M_2}{\Delta t}  \leq	 \bar f_2( \bar U_{K_{\max,V}}^{m+1}, \bar V_{K_{\max,V}}^{m+1}) - \bar f_2( \bar U_{K_{\max, V}}^{m+1},M_2)
				\leq \lambda ( \bar V_{K_{\max,V}}^{m+1}-M_2), \label{the1_eq7}
			\end{aligned}
		\end{align}
		which means $ \bar V_{K_{\max,V}}^{m+1}\leq M_2$ provided that  $\Delta t < \dfrac{1}{\lambda}$. This 
		contradicts with   $ \bar V_{K_{\max, V}}^{m+1}> M_{2}$.
		
		(2.2.2.2) $ \bar U_{K_{\max, V}}^{m+1}> M_{1}$.
		
		Since $\bar f_1$ is nonincreasing with respect to $v$ and $ \bar V_{K_{\max, U}}^{m+1}>M_2>m_2$, we have $ \bar f_{1}(M_{1}, \bar V_{K_{\max,U}}^{m+1})\leq  \bar  f_{1}(M_{1}, m_{2}) \leq 0$.  
		Similar to the derivation of \eqref{the1_eq2}, it holds that 
		\begin{align*}
			\begin{aligned}
				\frac{ \bar U_{K_{\max,U}}^{m+1}-M_1}{\Delta t}    \leq \lambda  ( \bar U_{K_{\max,U}}^{m+1}-M_{1}  ),
			\end{aligned} 
		\end{align*}
		which  implies $ \bar U_{K_{\max,U}}^{m+1}\leq M_1$ provided that   $\Delta t < \dfrac{1}{\lambda}$. This 
		contradicts with   $ \bar U_{K_{\max,U}}^{m+1}>  M_1$. 
		
		In conclusion, Case 1 does not hold provided that   $\Delta t < \dfrac{1}{\lambda}$. Similarly, the other cases do not hold when  $\Delta t < \dfrac{1}{\lambda}$.  
		Hence $ (\bar U_{K}^{m+1}, \bar  V_{K}^{m+1}) \in \Sigma$ for any $K \in \mathcal{J}_{in} \cup \mathcal{J}_{out}$, it follows that 
		$\bar f_1(\bar U_{K}^{m+1}, \bar V_{K}^{m+1}) =  f_1(\bar U_{K}^{m+1}, \bar V_{K}^{m+1})$,  $\bar f_2(\bar U_{K}^{m+1}, \bar V_{K}^{m+1}) =  f_2(\bar U_{K}^{m+1}, \bar V_{K}^{m+1})$. 
		We obtain  that $ (\bar U_{K}^{m+1}, \bar  V_{K}^{m+1}) \in \Sigma$ is also the solution  of following scheme 
		\begin{align*}
			m(K) \frac{ U_{K}^{m+1}-  U_{K}^{m}}{\Delta t}+ \sum_{\sigma \in \mathcal{E}_{K}} F_{K, \sigma}^{m+1}&=m(K) f_{1}( U_{K}^{m+1},  V_{K}^{m+1}) , & & \forall K \in \mathcal{J}_{in}, 
			\\
			m(K) \frac{ V_{K}^{m+1}- V_{K}^{m}}{\Delta t}+ \sum_{\sigma \in \mathcal{E}_{K}} { \tilde{F}}_{K, \sigma}^{m+1}&=m(K)  f_{2}( U_{K}^{m+1},  V_{K}^{m+1}),  & & \forall K \in \mathcal{J}_{in},
			\\
			U_K^{m+1} &= g_1(K,t^{m+1}),  & & \forall K \in \mathcal{J}_{out},
			\\
			V_K^{m+1} &= g_2(K,t^{m+1}),   & & \forall K \in \mathcal{J}_{out},
		\end{align*}
		which means that the finite volume scheme  \eqref{fvm-1}-\eqref{fvm-2} has a solution  $  (U^{m+1},  V^{m+1})$ in the invariant region. This completes the proof.    
	\end{proof}
	
	We shall now demonstrate that the finite volume scheme can preserve the  IRP  for the semilinear parabolic systems  with quasimonotone nondecreasing  reaction function. The same method can be applied for the case of a quasimonotone nonincreasing reaction function, which we will omit for brevity. 
	\begin{theorem}\label{thm2-fvm}
		Suppose that $\mathbf{f}=\left(f_1, f_2\right)$   is quasimonotone nondecreasing (or vice versa)  and Lipschitz continuous in $\Sigma =[m_1,M_1]\times [m_2,M_2]$ and  satisfies  \eqref{condition_invariant}, the initial and boundary  conditions   satisfy  $(u_0,v_0), (g_1,g_2) \in \Sigma$. When the time-step size satisfies   $\Delta t < \dfrac{1}{2\lambda}$,
		the finite volume scheme has a solution satisfying 
		\begin{eqnarray}\label{theorem2_eq_1}
			(U_K^n,V_K^n)\in \Sigma, \quad \forall  K \in  \mathcal{J},\quad  0 \leq n\leq N.
		\end{eqnarray}
	\end{theorem}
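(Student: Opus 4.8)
The plan is to run the same induction in the time level $n$ as in Theorem~\ref{thm1-fvm}: assuming $(U^m_K,V^m_K)\in\Sigma$ for all $K$, I would freeze the level and analyze the one-step prolonged problem \eqref{P1_1}--\eqref{P1_2}, but now with prolongations tailored to the cooperative case. Concretely, extend $f_1$ by clamping its coupling argument $v$ to $[m_2,M_2]$ and $f_2$ by clamping its coupling argument $u$ to $[m_1,M_1]$. The extended pair $(\bar f_1,\bar f_2)$ is again globally Lipschitz with the same constant $\lambda$, remains quasimonotone nondecreasing on $\mathbb{R}^2$, and satisfies the aligned-corner inequalities $\bar f_1(M_1,M_2)\le 0\le\bar f_1(m_1,m_2)$ and $\bar f_2(M_1,M_2)\le 0\le\bar f_2(m_1,m_2)$, which follow from \eqref{condition_invariant} exactly as in Remark~(1). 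Since $\bar f$ is globally Lipschitz and the numerical fluxes depend continuously on the unknowns, the prolonged nonlinear scheme has at least one solution $(\bar U^{m+1},\bar V^{m+1})$ by a standard fixed-point/degree argument; the task then reduces to showing this solution lies in $\Sigma$, because on $\Sigma$ one has $\bar f=f$ and the prolonged solution solves the original scheme \eqref{fvm-1}--\eqref{fvm-2}.

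For the invariant-region step I would argue by contradiction at the four extremal cells $K_{\max,U},K_{\min,U},K_{\max,V},K_{\min,V}$. Take the upper-$u$ violation $\bar U^{m+1}_{K_{\max,U}}>M_1$. As $\bar U^{m+1}$ attains its maximum there, the DMP structure of the flux gives $\sum_{\sigma}F^{m+1}_{K_{\max,U},\sigma}\ge 0$, and with $U^m_{K_{\max,U}}\le M_1$ this reproduces the differential inequality \eqref{the1_eq1}, namely $(\bar U^{m+1}_{K_{\max,U}}-M_1)/\Delta t\le\bar f_1(\bar U^{m+1}_{K_{\max,U}},\bar V^{m+1}_{K_{\max,U}})$. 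I would then compare against $\bar f_1(M_1,\bar V^{m+1}_{K_{\max,U}})$, keeping the \emph{same} coupling argument: because $\bar f_1$ is nondecreasing in its clamped coupling variable and $\bar f_1(M_1,M_2)\le 0$, this reference value is $\le 0$ for every second argument, so Lipschitz continuity in the own variable yields $(\bar U^{m+1}_{K_{\max,U}}-M_1)/\Delta t\le\lambda(\bar U^{m+1}_{K_{\max,U}}-M_1)$, a contradiction once $\Delta t<1/\lambda$. The lower-$u$ case uses $\bar f_1(m_1,\cdot)\ge 0$, while the two $v$-cases are identical after swapping the roles of $(f_1,u)$ and $(f_2,v)$ and comparing with $\bar f_2(\cdot,M_2)\le 0$ and $\bar f_2(\cdot,m_2)\ge 0$.

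The genuinely new difficulty, and the source of the stronger restriction $\Delta t<1/(2\lambda)$ relative to the $1/\lambda$ of the mixed case, is the cooperative coupling. In Theorem~\ref{thm1-fvm} the extremal cascade is self-terminating because pushing one component up drives the other down; here it is not, and the naive cascade can loop, an upper-$u$ violation producing through the $v$-equation an upper-$v$ violation and then returning to $u$. I expect breaking this loop to be the main obstacle. The clean remedy is precisely the clamped comparison above: fixing the coupling argument at its clamped value decouples the four cases and closes each one independently, so the analysis in fact goes through already under $\Delta t<1/\lambda$, hence \emph{a fortiori} under the stated $\Delta t<1/(2\lambda)$. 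If instead one declines to exploit the clamp and breaks the loop by tracking the scalar $W=\max_K\max\{(\bar U_K-M_1)_+,(\bar V_K-M_2)_+\}$ (and its lower analogue) and comparing at the aligned corner $(M_1,M_2)$, both a $u$-deviation and a $v$-deviation enter the two-variable Lipschitz bound simultaneously, and this cooperative double-counting is the natural origin of the factor $2$ in the time-step condition. Either way, once $(\bar U^{m+1},\bar V^{m+1})\in\Sigma$ is established, $\bar f=f$ there and the induction closes.
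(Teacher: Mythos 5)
Your proposal is correct, and its scaffolding --- induction in $n$, the clamped prolongation $(\bar f_1,\bar f_2)$, existence for the prolonged one-step problem, and contradiction at the four extremal cells --- matches the paper's. Where you genuinely depart from the paper is in the treatment of the hard subcase where both components overshoot simultaneously. The paper applies your ``same coupling argument'' comparison only when the coupling value is in range: its subcases $\bar V^{m+1}_{K_{\max,U}}\le M_2$ and $\bar U^{m+1}_{K_{\max,V}}\le M_1$ are exactly your single-variable Lipschitz step. But when $\bar U^{m+1}_{K_{\max,V}}>M_1$ and $\bar V^{m+1}_{K_{\max,U}}>M_2$ hold together, the paper instead runs a cascade between the two extremal cells, applies the two-variable Lipschitz bound at the corner $(M_1,M_2)$ in both equations to obtain \eqref{the2_eq6} and \eqref{the2_eq7}, and derives the contradiction from comparing the coefficients $\lambda\Delta t/(1-\lambda\Delta t)$ and $(1-\lambda\Delta t)/(\lambda\Delta t)$; this coefficient comparison is the only place where $\Delta t<1/(2\lambda)$ is actually needed, and it is precisely the ``cooperative double-counting'' you identify as the origin of the factor $2$. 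Your route short-circuits this entirely: because the prolongation clamps the coupling variable, $\bar f_1(M_1,v)\le 0$ and $\bar f_1(m_1,v)\ge 0$ hold for \emph{every} $v\in\mathbb{R}$ (and $\bar f_2(u,M_2)\le 0$, $\bar f_2(u,m_2)\ge 0$ for every $u$), so the case distinction on the coupling value is unnecessary and each of the four violations closes independently under the weaker condition $\Delta t<1/\lambda$. This is both simpler and sharper: it shows the stated restriction $1/(2\lambda)$ is an artifact of the paper's proof rather than intrinsic to the quasimonotone nondecreasing case. One caveat you share with the paper: since only the coupling argument is clamped, the assertion that $(\bar f_1,\bar f_2)$ is globally Lipschitz with constant $\lambda$ implicitly requires $f_1,f_2$ to be Lipschitz in their own variable outside $\Sigma$ (the comparison is made at points with $\bar U^{m+1}_{K_{\max,U}}>M_1$); clamping both arguments would remove this issue in either proof without affecting any of the sign properties used.
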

	\begin{proof}
		We shall adopt  the same procedure  as in  the proof  of Theorem \ref{thm1-fvm}. It is obvious that \eqref{theorem2_eq_1} holds for $n=0$.
		Suppose that \eqref{theorem2_eq_1} holds for any $n \leq m$, where $m\geq 0$. Our objective is  to demonstrate that  \eqref{theorem2_eq_1} holds  for $n = m+1$.
		
		The definitions of  prolongation functions $\bar f_1(u_1,u_2)$ and $\bar f_2(u_1,u_2)$ are the  same as in the proof of Theorem \ref{thm1-fvm}, and  $(\bar f_1, \bar f_2)$ is also   quasimonotone nondecreasing   in $\mathbb{R}^2$ and satisfies
		\begin{align*}
			\bar f_1(M_1,M_2)  \leq   0 \leq  \bar  f_1(m_1,m_2),
			\\
			\bar f_2(M_1,M_2) \leq 0 \leq \bar f_2(m_1,m_2).
		\end{align*}
		Similar to the proof of Theorem  \ref{thm1-fvm},  we  define   $  (\bar{U}^{m+1},\bar{V}^{m+1})$   the solution of the finite volume  scheme corresponding  to $(\bar f_1, \bar f_2)$. The definitions of  $K_{\max,U}$, $K_{\min,U}$, $K_{\max,V}$ and $K_{\min,V}$  are also   the same as those in Theorem  \ref{thm1-fvm}. 
		
		Suppose that \eqref{theorem2_eq_1} does not hold for $n=m+1$, then one of the following cases holds:
		\begin{align*}
			&\text{Case~1:}\quad \bar U_{K_{\max,U}}^{m+1}  >M_1. \quad
			\text{Case~2:}\quad  \bar  U_{K_{\min,U}}^{m+1} <m_1.
			\\
			&\text{Case~3:}\quad  \bar V_{K_{\max,V}}^{m+1}  >M_2. \quad
			\text{Case~4:}\quad  \bar  V_{K_{\min,V}}^{m+1} <m_2.
		\end{align*}
		Let us suppose Case 1 holds.   We can obtain that   \eqref{the1_eq1} holds since $ \bar U^{m+1}$ attains its maximum on $K_{\max,U}$.           
		
		(1)  $ \bar V_{K_{\max, U}}^{m+1} \leq M_{2}$.
		
		Since 	$\bar f_1$ is nondecreasing with respect  to $v$, we have $\bar f_1(M_1, \bar V_{K_{\max, U}}^{m+1})\leq \bar f_1(M_1,M_2)\leq 0$.  Similar to the derivation of     \eqref{the1_eq2}, we obtain 
		\begin{align}
			\frac{ \bar U_{K_{\max,U}}^{m+1}-M_1}{\Delta t} 
			\leq \lambda  ( \bar U_{K_{\max,U}}^{m+1}-M_{1}  ).  \label{the2_eq2}
		\end{align}
		\eqref{the2_eq2} implies  $\bar U_{K_{\max,U}}^{m+1} \leq M_1$ provided that  $\Delta t < \dfrac{1}{\lambda}$,  this leads to  a contradiction with Case  1.
		
		(2)  $\bar V_{K_{\max, U}}^{m+1} >M_{2}$.
		
		The assumption 	$ \bar V_{K_{\max, U}}^{m+1} >M_{2}$ implies  	$ \bar V_{K_{\max, V}}^{m+1} >M_{2}$.   It  follows  that  \eqref{the1_eq6} since  $ \bar V_{K_{\max, V}}^{m+1} $ is  the maximum  of $\bar V^{m+1}$.
		
		(2.1) $\bar U_{K_{\max, V}}^{m+1} \leq  M_{1}$.
		
		Since 	$\bar f_2$ is nondecreasing with respect  to $u$, we have $\bar f_2( \bar U_{K_{\max, V}}^{m+1},M_2)\leq \bar f_2(M_1,M_2)\leq 0$.   Similar to the derivation of    \eqref{the1_eq7}, we have
		\begin{align}
			\frac{ \bar V_{K_{\max,V}}^{m+1}-M_2}{\Delta t}  \leq	  \lambda ( \bar V_{K_{\max,V}}^{m+1}-M_2). \label{the2_eq3}
		\end{align}
		When  $\Delta t < \dfrac{1}{\lambda}$,   \eqref{the2_eq3} implies that $\bar V_{K_{\max,V}}^{m+1} \leq M_2$, which contradicts with $\bar V_{K_{\max, U}}^{m+1} >M_{2}$.
		
		(2.2) $\bar U_{K_{\max, V}}^{m+1} > M_{1}$.
		
		The assumption  $\bar U_{K_{\max, V}}^{m+1} > M_{1}$ implies that  $\bar U_{K_{\max, U}}^{m+1} > M_{1}$.   From the Lipschitz  continuity of $\bar f_1$ and  $\bar f_1(M_1 ,M_2) \geq 0$, it follows that 
		\begin{align}
			\frac{ \bar U_{K_{\max,U}}^{m+1}-M_1}{\Delta t}  &\leq	\bar f_1( \bar U_{K_{\max,U}}^{m+1}, \bar V_{K_{\max,U}}^{m+1}) - \bar f_1(M_1 ,M_2)  \nonumber
			\\
			& \leq	\lambda (\bar U_{K_{\max,U}}^{m+1}-M_1) + \lambda (\bar V_{K_{\max,U}}^{m+1}-M_2). \label{the2_eq4}
		\end{align}
		From \eqref{the2_eq4}, we can  obtain that 
		\begin{align}
			\bar U_{K_{\max,U}}^{m+1}-M_1 \leq \dfrac{\lambda \Delta t}{1-\lambda \Delta t} (\bar V_{K_{\max,U}}^{m+1}-M_2). \label{the2_eq6}
		\end{align}
		Similar to the derivations of    \eqref{the2_eq4} and \eqref{the2_eq6},  $\bar V_{K_{\max,V}}^{m+1}$ satisfies the following inequalities
		\begin{align*}
			\frac{ \bar V_{K_{\max,U}}^{m+1}-M_2}{\Delta t} & \leq	\bar f_2( \bar U_{K_{\max,V}}^{m+1}, \bar V_{K_{\max,V}}^{m+1}) - \bar f_2(M_1 ,M_2) \nonumber
			\\
			&	\leq	\lambda (\bar U_{K_{\max,V}}^{m+1}-M_1) + \lambda (\bar V_{K_{\max,V}}^{m+1}-M_2),
		\end{align*}
		and
		\begin{align}
			\bar U_{K_{\max,V}}^{m+1}-M_1 \geq \dfrac{1-\lambda \Delta t}{\lambda \Delta t }(\bar V_{K_{\max,V}}^{m+1}-M_2).  \label{the2_eq7}
		\end{align}
		Combining \eqref{the2_eq6} and \eqref{the2_eq7}, we see that $\dfrac{\lambda \Delta t}{1-\lambda \Delta t} (\bar V_{K_{\max,U}}^{m+1}-M_2) \geq  \dfrac{1-\lambda \Delta t}{\lambda \Delta t }(\bar V_{K_{\max,V}}^{m+1}-M_2)$.   We have 
		$\dfrac{\lambda \Delta t}{1-\lambda \Delta t} > \dfrac{1-\lambda \Delta t}{\lambda \Delta t }$ provided that $\Delta t < \dfrac{1}{2\lambda}$, which derives  that
		$\bar V_{K_{\max,U}}^{m+1}\leq  M_2$.  It  contradicts with the assumption  that $\bar V_{K_{\max, U}}^{m+1} >M_{2}$.   Hence, Case 1 does not hold when $\Delta t < \dfrac{1}{2\lambda}$. The proofs of other cases follow similarly.
		
		In conclusion, we have proved that  $ (\bar U_{K}^{m+1}, \bar  V_{K}^{m+1})\in \Sigma$ for any $K \in \mathcal{J}_{in} \cup \mathcal{J}_{out}$, then we have 
		$\bar f_1(\bar U_{K}^{m+1}, \bar V_{K}^{m+1}) =  f_1(\bar U_{K}^{m+1}, \bar V_{K}^{m+1})$ and $\bar f_2(\bar U_{K}^{m+1}, \bar V_{K}^{m+1}) =  f_2(\bar U_{K}^{m+1}, \bar V_{K}^{m+1})$. 
		It means that $ (\bar U^{m+1}, \bar  V^{m+1}) $ is also the solution  of  the finite volume scheme  \eqref{fvm-1}-\eqref{fvm-2}, which  completes the proof.  
	\end{proof}
	
	The existence of solution for the nonlinear finite volume scheme  \eqref{fvm-1}-\eqref{fvm-2} can be established using the same method as Theorem 4 in  \cite{Zhou2022},  and so is omitted.
	\begin{theorem}\label{thm-fvm-existence}
		Suppose that $\mathbf{f}=\left(f_1, f_2\right)$  is  quasimonotone  and Lipschitz continuous in $\Sigma =[m_1,M_1]\times [m_2,M_2]$, and satisfies  \eqref{condition_invariant}, the initial and boundary  conditions    satisfy   $(u_0,v_0), (g_1,g_2) \in \Sigma$. When  $\Delta t < \dfrac{1}{2\lambda}$,    the nonlinear finite volume scheme   \eqref{fvm-1}-\eqref{fvm-2} has at least one solution.
	\end{theorem}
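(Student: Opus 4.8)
The plan is to establish existence by Brouwer's fixed-point theorem, reusing the invariant-region estimates of Theorems \ref{thm1-fvm} and \ref{thm2-fvm}. Because $f_1,f_2$ enter the scheme only through their values on $\Sigma$, I would first pass to the prolonged scheme obtained by replacing $f_1,f_2$ with the globally defined extensions $\bar f_1,\bar f_2$ from the proof of Theorem \ref{thm1-fvm}. These extensions are globally Lipschitz with the same constant $\lambda$ and grow at most linearly. By Theorems \ref{thm1-fvm}--\ref{thm2-fvm} any solution of the prolonged scheme necessarily lies in $\Sigma$, where $\bar f_i=f_i$; hence it is automatically a solution of \eqref{fvm-1}-\eqref{fvm-2}, and it suffices to produce a solution of the prolonged problem.

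At the time level $m+1$ (with $U^m_K,V^m_K$ known) I would build a fixed-point map as follows. Given an iterate $W=(U_K,V_K)_{K\in\mathcal{J}_{in}}$ confined to the box $\Sigma$, freeze in the numerical flux all the solution-dependent data — the coefficients $\tau_\sigma,D_\sigma$ and, above all, the case selectors $\eta_{K,\sigma}$ and $\gamma_0$ — at $W$, while keeping the reaction term implicit. Denoting by $A(W)$ the frozen diffusion operator, the sub-problem for the new unknown $\tilde W=(\tilde U,\tilde V)$ reads, cellwise, $\tfrac{m(K)}{\Delta t}(\tilde W_K-W^m_K)+[A(W)\tilde W]_K=m(K)\,\bar{\mathbf f}(\tilde W_K)$ for $K\in\mathcal{J}_{in}$. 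Since the frozen flux retains the DMP structure of Section 3, $A(W)$ has nonnegative diagonal and nonpositive off-diagonal entries with convex-combination rows, so $\tfrac{m(K)}{\Delta t}I+A(W)$ is an M-matrix; combined with the Lipschitz bound on $\bar{\mathbf f}$ and the restriction $\Delta t<\tfrac{1}{2\lambda}$, the map $\tilde W\mapsto\tfrac{m(K)}{\Delta t}\tilde W+A(W)\tilde W-m(K)\bar{\mathbf f}(\tilde W)$ is strongly monotone, hence a homeomorphism. This defines $\tilde W=T(W)$ uniquely, and a fixed point of $T$ is exactly a solution of the prolonged scheme.

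To apply Brouwer it remains to check that $T$ is a continuous self-map of the compact convex box $\Sigma$. The self-map property $T(\Sigma)\subseteq\Sigma$ is precisely the maximum-principle argument already carried out in Theorems \ref{thm1-fvm}--\ref{thm2-fvm}: because the frozen flux still satisfies $F_{K,\sigma}\le0$ at an interior maximum and $\ge0$ at an interior minimum, the chain of contradictions in those proofs applies verbatim to $T(W)$, and this is where $\Delta t<\tfrac{1}{2\lambda}$ is used. Brouwer's theorem then yields a fixed point $W^\star\in\Sigma$, i.e. a solution of the scheme in the invariant region. The main obstacle is the continuity of $T$, which reduces to the continuity of the flux coefficients across the branch tests $|\Delta_\sigma|\le\varepsilon_0$, the Case 1 / Case 2 dichotomy, and the branch $|U_K-U_L|=0$: one must verify that the definitions $\eta_{K,\sigma}=\Delta_\sigma/(U_K-U_{K'})$ and $\gamma_0=-\Delta_\sigma/(\tau_\sigma(U_K-U_L))$ glue continuously at the thresholds, the cut-offs $\varepsilon_0,\varepsilon_1$ being exactly what prevents blow-up of these ratios. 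Once continuity is secured, no inequality beyond those of Theorems \ref{thm1-fvm} and \ref{thm2-fvm} is needed.
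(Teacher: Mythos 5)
Your overall architecture --- extend $(f_1,f_2)$ to $(\bar f_1,\bar f_2)$, freeze the nonlinear flux coefficients at a point $W$ of the box, solve the resulting sub-problem to define $T(W)$, obtain the self-map property from the DMP/IRP arguments of Theorems~\ref{thm1-fvm}--\ref{thm2-fvm}, and invoke Brouwer --- is indeed the method the paper has in mind: the paper gives no proof of Theorem~\ref{thm-fvm-existence} at all, deferring to Theorem~4 of \cite{Zhou2022}, which is exactly this kind of fixed-point argument, and your observation that freezing preserves the nonnegativity of the coefficients (hence the DMP structure and the self-map property) is sound. However, two of your steps do not stand as written. The well-posedness of the frozen sub-problem cannot be deduced from ``$\frac{m(K)}{\Delta t}I+A(W)$ is an M-matrix, hence the map is strongly monotone'': a nonsymmetric M-matrix need not have a positive definite symmetric part (take $\left(\begin{smallmatrix}1&0\\-10&1\end{smallmatrix}\right)$, for which $x^{\mathrm T}Bx=-8<0$ at $x=(1,1)^{\mathrm T}$), so strong monotonicity simply does not follow. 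The conclusion is still true, but for a different reason: each row of $\frac{m(K)}{\Delta t}I+A(W)$ has diagonal-dominance surplus at least $m(K)/\Delta t$, hence $\|B^{-1}c\|_\infty\le\max_K \Delta t\,|c_K|/m(K)$, and the Picard map $\tilde W\mapsto B^{-1}\bigl(b+m\bar{\mathbf f}(\tilde W)\bigr)$ is an $\ell^\infty$-contraction with factor $2\lambda\Delta t<1$; it is this Banach argument, not monotonicity, that uses $\Delta t<\frac{1}{2\lambda}$ and makes $T$ well defined.

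The genuine gap is the continuity of $T$, which you rightly call the main obstacle but then dispose of by asserting that $\varepsilon_0,\varepsilon_1$ make the branch definitions ``glue continuously at the thresholds''. They do not: the cut-offs keep the scheme away from division by zero inside each branch, but they do not make the frozen coefficients match across branch boundaries. Concretely: (i) across the switch $|\Delta_\sigma|=\varepsilon_0$, the matrix entry coupling $K$ to $K'$ jumps from $0$ to $-\eta_{K,\sigma}=-\Delta_\sigma/(W_K-W_{K'})$, a jump of modulus $\varepsilon_0/|W_K-W_{K'}|>0$; (ii) inside Case~1, $\eta_{K,\sigma}$ is not even bounded, since $|\Delta_\sigma|>\varepsilon_0$ while $W_K-W_{K'}$ may tend to $0$ as $W$ approaches the Case~1/Case~2 interface; (iii) the selector of $K'$ (whether $U_{K_1}$ or $U_{K_2}$ is used, and which neighboring cell realizes that extremum) changes discontinuously with $W$. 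Thus the map $T$ you constructed is discontinuous at points interior to the box, and Brouwer's theorem cannot be applied to it as is. Repairing this --- by regularizing the case selection, by working with a residual map rather than a frozen-coefficient solve, or by reproducing the specific construction in the proof of Theorem~4 of \cite{Zhou2022} --- is precisely the nontrivial content that the paper omits; your closing remark that ``no inequality beyond those of Theorems~\ref{thm1-fvm} and \ref{thm2-fvm} is needed'' underestimates what remains to be proved.
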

	
\section{The iterative  method preserving the IRP}
In this section, we  design iterative  method  to solve the nonlinear scheme   \eqref{fvm-1}-\eqref{fvm-2}, and then prove the IRP  of the iteration.
To design the iterative scheme, the nonlinear numerical flux   needs to be linearized first.  Denote $U^{n+1,s+1}$  and $V^{n+1,s+1}$  the $(s+1)$-th iterative numerical solutions at $t^{n+1}$.  We use the solution  $U^{n+1,s}$ to calculate the coefficients $\eta_{K,\sigma}^{n+1,s}$, $\eta_{L,\sigma}^{n+1,s}$, $\gamma_0^{n+1,s}$ and  $\Delta_{\sigma}^{n+1,s}$.
For Case 1 in the algorithm, the numerical flux $F_{K,\sigma}^{n+1,s+1}$ and $F_{L,\sigma}^{n+1,s+1}$    are defined as
\begin{align*}
	&F_{K, \sigma}^{n+1,s+1}  = \tau_{\sigma}^{n+1}\left(U_{K}^{n+1,s+1}-U_{L}^{n+1,s+1}\right) + \eta_{K,\sigma}^{n+1,s}\left(U_{K}^{n+1,s+1}-U_{K^{\prime}}^{n+1,s+1}\right), 
	\\
	&F_{L, \sigma}^{n+1,s+1}  =\tau_{\sigma}^{n+1}\left(U_{L}^{n+1,s+1}-U_{K}^{n+1,s+1}\right)+ \eta_{L,\sigma} ^{n+1,s}\left(U_{L}^{n+1,s+1}-U_{L^{\prime}}^{n+1,s+1}\right),
\end{align*}
and for Case 2, $F_{K,\sigma}^{n+1,s+1}$  and $F_{L,\sigma}^{n+1,s+1}$   are defined as
\begin{align*}
	&F_{K, \sigma}^{n+1,s+1}  = (1-\gamma_0^{n+1,s})\tau_{\sigma}^{n+1}\left(U_{K}^{n+1,s+1}-U_{L}^{n+1,s+1}\right),
	\\
	&F_{L, \sigma}^{n+1,s+1}  = (1-\gamma_0^{n+1,s})\tau_{\sigma}^{n+1}\left(U_{L}^{n+1,s+1}-U_{K}^{n+1,s+1}\right).
\end{align*}
$\tilde{F}_{K,\sigma}^{n+1,s+1}$  and $\tilde{F}_{L,\sigma}^{n+1,s+1}$ are defined similarly.

The treatment of the nonlinear source term is crucial for preserving IRP during the iteration.   For given $U^{n+1,s}$ and $V^{n+1,s}$,  the solution of the  iteration $(U^{n+1,s+1},V^{n+1,s+1})$ satisfies 
\begin{align}
	m(K) \dfrac{U_{K}^{n+1,s+1}-U_{K}^{n}}{\Delta t}+\sum_{\sigma \in \mathcal{E}_{K}}F_{K, \sigma}^{n+1,s+1}+\lambda m(K)U_K^{n+1,s+1}&=m(K)\left(\lambda U_K^{n+1,s}+f_{1}(U_{K}^{n+1,s}, V_{K}^{n+1,s}) \right),  \label{iter-1}
	\\
	m(K) \dfrac{V_{K}^{n+1,s+1}-V_{K}^{n}}{\Delta t}+\sum_{\sigma \in \mathcal{E}_{K}}\tilde{F}_{K, \sigma}^{n+1,s+1}+\lambda m(K)V_K^{n+1,s+1}&= m(K)\left( \lambda V_K^{n+1,s}+  f_{2}(U_{K}^{n+1,s+1}, V_{K}^{n+1,s})\right),   \label{iter-2}
\end{align}
for any $K \in \mathcal{J}_{in}$, and  subject  to $U_K^{n+1,s+1} = g_1(K,t^{n+1})$ and  $
V_K^{n+1,s+1}  = g_2(K,t^{n+1})$ for any  $K \in \mathcal{J}_{out}$,  $U_K^0  =   u_0(K)$ and  $V_K^0 =  v_0(K)$ for any $ K \in \mathcal{J}_{in}\cup \mathcal{J}_{out}$, where   $n\geq 0$,  $s\geq 0$.  The iterative algorithm is described in Algorithm \ref{Algorithm_1}, where  we set $\varepsilon = 10^{-8}$ in the numerical  experiments.

\begin{algorithm}[!htbp]
	\caption{The IRP-preserving iteration}\label{Algorithm_1}
	\label{2}
	\begin{algorithmic}[1]
		\State Compute the initial  vector  $(U^0,V^0)$;
		\State $n = 0$;
		\While{$t^{n+1} \leq T$}
		\State Let $s=0$;
		\State Take $(U^{n+1,0},V^{n+1,0}) = (U^{n},V^{n}) $;
		\While{$||U^{n+1,s+1} - U^{n+1,s}||_{\infty} > \varepsilon_{non}$ or $||V^{n+1,s+1} - V^{n+1,s}||_{\infty} > \varepsilon_{non}$}
		\State  Solve the  linear system \eqref{iter-1}-\eqref{iter-2};
	\State  Let $s=s+1$;
	\EndWhile
	\State  Let $n = n+1$;
	\EndWhile
\end{algorithmic}
\end{algorithm}

We prove that the iterative method can preserve the IRP for the coupled quasimonotone  parabolic system.
\begin{theorem}
Suppose that $\mathbf{f}=\left(f_1, f_2\right)$ is quasimonotone and Lipschitz continuous in $\Sigma =[m_1,M_1]\times [m_2,M_2]$ and  satisfies  \eqref{condition_invariant}, the initial and boundary  conditions   satisfy  $(u_0,v_0) \in \Sigma$,  $(g_1,g_2) \in \Sigma$, then for any  $\Delta  t>0$, the solution of the  iteration \eqref{iter-1}-\eqref{iter-2} satisfies
\begin{eqnarray}\label{theorem_2_eq}
	(U_K^{n,s}, V_K^{n,s} ) \in \Sigma, \quad \forall K \in  \mathcal{J}, \quad  0\leq n\leq N, \quad s\geq0.
\end{eqnarray}
\end{theorem}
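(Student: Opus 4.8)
The plan is to argue by a double induction, treating the time level $n$ in the outer loop and the iteration index $s$ in the inner loop, exactly mirroring Algorithm \ref{Algorithm_1}. For the outer step I assume $(U^n_K,V^n_K)\in\Sigma$ for every $K\in\mathcal{J}$, which holds at $n=0$ by the hypothesis $(u_0,v_0)\in\Sigma$. For the inner step I assume $(U^{n+1,s}_K,V^{n+1,s}_K)\in\Sigma$ for every $K$; this holds for $s=0$ because $(U^{n+1,0},V^{n+1,0})=(U^n,V^n)\in\Sigma$, and the goal is to deduce $(U^{n+1,s+1}_K,V^{n+1,s+1}_K)\in\Sigma$. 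Since \eqref{iter-1} for $U^{n+1,s+1}$ depends only on iterate-$s$ quantities on its right-hand side, I first establish $U^{n+1,s+1}_K\in[m_1,M_1]$, and only afterwards treat $V^{n+1,s+1}$ via \eqref{iter-2}; this ordering is legitimate precisely because the reaction $f_2$ in \eqref{iter-2} is sampled at the already-computed $U^{n+1,s+1}$.

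The central algebraic device is the shift by $\lambda$: because $\mathbf f$ is Lipschitz with constant $\lambda$, the maps $u\mapsto \lambda u+f_1(u,v)$ and $v\mapsto \lambda v+f_2(u,v)$ are nondecreasing. I would bound $U^{n+1,s+1}$ by contradiction. Let $K=K_{\max,U}$ be a cell where $U^{n+1,s+1}$ attains its maximum over $\mathcal{J}$; if $K\in\mathcal{J}_{out}$ the bound is immediate from the Dirichlet data, so I may assume $K$ is interior. At such a maximum the DMP form of the linearized flux (each contribution being a positive multiple of a difference $U^{n+1,s+1}_K-U^{n+1,s+1}_{K_j}\ge0$, with the frozen coefficients $\tau_\sigma$, $\eta_{K,\sigma}^{n+1,s}$ and $1-\gamma_0^{n+1,s}$ all positive) forces $\sum_{\sigma}F^{n+1,s+1}_{K,\sigma}\ge0$. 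Feeding this into \eqref{iter-1}, using $U^n_K\le M_1$, the monotonicity of $u\mapsto\lambda u+f_1$ with $U^{n+1,s}_K\le M_1$, and finally $f_1(M_1,V^{n+1,s}_K)\le0$ --- which holds \emph{directly}, since $V^{n+1,s}_K\in[m_2,M_2]$ by the inner hypothesis and \eqref{condition_invariant} gives $f_1(M_1,\cdot)\le0$ on $\Sigma$ --- yields $\big(\tfrac{1}{\Delta t}+\lambda\big)U^{n+1,s+1}_K\le\big(\tfrac{1}{\Delta t}+\lambda\big)M_1$. Because $\tfrac1{\Delta t}+\lambda>0$ for every $\Delta t>0$, this contradicts $U^{n+1,s+1}_K>M_1$. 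The lower bound is symmetric, using a minimizing interior cell, $\sum_\sigma F^{n+1,s+1}_{K,\sigma}\le0$, $U^n_K\ge m_1$, and $f_1(m_1,V^{n+1,s}_K)\ge0$.

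Once $U^{n+1,s+1}\in[m_1,M_1]$ is in hand, the identical template applied to \eqref{iter-2} closes the induction: at a maximizing cell for $V^{n+1,s+1}$ I use $\sum_\sigma\tilde F^{n+1,s+1}_{K,\sigma}\ge0$, $V^n_K\le M_2$, the monotonicity of $v\mapsto\lambda v+f_2$ with $V^{n+1,s}_K\le M_2$, and the sign fact $f_2(U^{n+1,s+1}_K,M_2)\le0$, which is now available because $U^{n+1,s+1}_K\in[m_1,M_1]$ has just been proved, so that \eqref{condition_invariant} applies at $(U^{n+1,s+1}_K,M_2)\in\Sigma$; the lower bound is again symmetric. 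The decisive simplification relative to Theorems \ref{thm1-fvm}--\ref{thm2-fvm} is that every reaction value appearing is sampled at a point of $\Sigma$ (at iterate $s$ for $f_1$, and at the already-bounded $U^{n+1,s+1}$ for $f_2$), so \eqref{condition_invariant} is invoked verbatim and the nested case analysis on out-of-range values disappears. This is also why the estimate is unconditional in $\Delta t$: the $\lambda$-shift supplies the monotonicity that the fully implicit scheme had to purchase with $\Delta t<1/\lambda$.

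I expect the only genuinely delicate points to be (i) checking that each of the three branches defining the numerical flux retains the one-signed, difference-of-unknowns structure after the nonlinear coefficients are frozen at iterate $s$, so that the flux sum carries the correct sign at a discrete extremum; and (ii) the bookkeeping of the quasimonotone type, so that the single template above covers nondecreasing, nonincreasing and mixed $\mathbf f$ uniformly --- which it does, since \eqref{condition_invariant} already encodes the edge conditions $f_1(m_1,v),f_1(M_1,v),f_2(u,m_2),f_2(u,M_2)$ over all of $\Sigma$. Well-posedness of each linear solve is not an obstacle: the coefficient matrices have nonpositive off-diagonal entries and strictly dominant positive diagonals owing to the contributions $m(K)/\Delta t$ and $\lambda m(K)$, hence are invertible $M$-matrices.
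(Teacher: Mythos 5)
Your proof is correct, and its skeleton coincides with the paper's: a double induction over the time level $n$ and the iteration index $s$, the DMP structure of the frozen-coefficient fluxes at a discrete extremum, the bound on $U^{n+1,s+1}$ established first and then fed into \eqref{iter-2} for $V^{n+1,s+1}$, and the $\lambda$-shift making the result unconditional in $\Delta t$. Where you genuinely diverge is in how the sign conditions enter. The paper fixes one quasimonotone type ($f_1$ nonincreasing, $f_2$ nondecreasing), uses that monotonicity to replace $V^{m+1,s_0}_{K}$ by $m_2$ (respectively $U^{m+1,s_0+1}_{K}$ by $M_1$), and then applies Lipschitz continuity against the corner values $f_1(M_1,m_2)\le 0$ and $f_2(M_1,M_2)\le 0$, dispatching the remaining quasimonotone types with ``the proofs are similar.'' You instead apply the Lipschitz bound in the first argument directly against $f_1(M_1,V^{n+1,s}_K)\le 0$, and in the second argument against $f_2(U^{n+1,s+1}_K,M_2)\le 0$; this is legitimate because \eqref{condition_invariant} is stated in the edge form $f_1(M_1,v)\le 0$, $f_2(u,M_2)\le 0$ for all $(u,v)\in\Sigma$, and the arguments you plug in lie in the relevant intervals by the induction hypothesis (and, for $f_2$, by the just-proved bound on $U^{n+1,s+1}$). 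The payoff is that your template never invokes quasimonotonicity of $\mathbf{f}$ at all: one argument covers the nondecreasing, nonincreasing, and mixed cases verbatim, so the theorem as you prove it holds for any Lipschitz $\mathbf{f}$ satisfying \eqref{condition_invariant}. Quasimonotonicity is genuinely needed for the continuous theory (Lemma \ref{F-N-invariant_region}) and for the implicit scheme (Theorems \ref{thm1-fvm} and \ref{thm2-fvm}), but in the iteration proof it only adds the case analysis you eliminated. Your ancillary points are also sound: the frozen coefficients $\tau_\sigma>0$, $\eta^{n+1,s}_{K,\sigma}>0$, $1-\gamma^{n+1,s}_0\ge\varepsilon_1>0$ do preserve the one-signed difference structure of the flux at an extremum, and the M-matrix argument for solvability of each linear system is a point the paper leaves implicit.
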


\begin{proof}
The  proof of this theorem is   similar to that of Theorem  \ref{thm1-fvm}.    
We assume that the reaction function is mixed quasimonotone, where $f_1$ is quasimonotone nonincreasing and $f_2$  is quasimonotone nondecreasing. The proofs  for    other types of quasimonotone reaction functions are  similar.
It is easy to see that $(U^0_K,V^0_K) \in \Sigma$ for any  $K \in  \mathcal{J}_{in}  \cup \mathcal{J}_{out}$.  Suppose that \eqref{theorem_2_eq} holds for any $n \leq  m+1$, $s\leq s_0$, where $m\geq 0$,  $s_0\geq 0$. If \eqref{theorem_2_eq} is proved for $n = m+1$, $s= s_0+1$, then the theorem follows immediately.

Denote  $K_{\max,U}$, $K_{\min,U}$, $K_{\max,V}$, $K_{\min,V}$  such that
\begin{eqnarray*}
	&&U_{K_{\max,U}}^{m+1,s_0+1} = \max_{K \in  \mathcal{J}}U^{m+1,s_0+1}_K, \quad U_{K_{\min,U}}^{m+1,s_0+1} = \min_{K \in  \mathcal{J}}U^{m+1,s_0+1}_K,
	\\
	&&V_{K_{\max,V}}^{m+1,s_0+1} = \max_{K \in  \mathcal{J}}V^{m+1,s_0+1}_K, \quad V_{K_{\min,V}}^{m+1,s_0+1} = \min_{K \in  \mathcal{J}}V^{m+1,s_0+1}_K.
\end{eqnarray*}

	
	The proof can be divided into two steps.  The  first step is  to prove that  $U^{m+1,s_0+1}_{K_{\max,U}}\leq M_1$.
	
	Let us suppose that $U^{m+1,s_0+1}_{K_{\max,U}}> M_1$    holds.  Starting from  \eqref{iter-1} and   using a derivation similar to \eqref{the1_eq1}, it follows that 
	\begin{align}
		\begin{aligned}
			\frac{U^{m+1,s_0+1}_{K_{\max,U}}-M_1}{\Delta t} \leq  &	\frac{U^{m+1,s_0+1}_{K_{\max,U}}-U^{m}_{K_{\max,U}}}{\Delta t}   
			\\
			\leq  &	\lambda (U^{m+1,s_0}_{K_{\max,U}}-U^{m+1,s_0+1}_{K_{\max,U}})+f_{1}(U^{m+1,s_0}_{K_{\max,U}},V^{m+1,s_0}_{K_{\max,U}}).
		\end{aligned}\label{the2_eq1}
	\end{align}
	Since $f_1$  is nonincreasing with respect to $v$, we have $f_1(U^{m+1,s_0}_{K_{\max,U}},V^{m+1,s_0}_{K_{\max,U}})\leq f_1(U^{m+1,s_0}_{K_{\max,U}},m_2)$. Similar to the derivation of \eqref{the1_eq2}, it holds that 
	\begin{align}
		\begin{aligned}
			\frac{U^{m+1,s_0+1}_{K_{\max,U}}-M_1}{\Delta t} &\leq  \lambda (U^{m+1,s_0}_{K_{\max,U}}-U^{m+1,s_0+1}_{K_{\max,U}})+f_{1}(U^{m+1,s_0}_{K_{\max,U}},m_2)-f_1(M_1,m_2)
			\\
			& \leq  \lambda(U^{m+1,s_0}_{K_{\max,U}}-M_1) +\lambda (M_1-U^{m+1,s_0}_{K_{\max,U}} )
			=  0, \label{the_eq10}
		\end{aligned}
	\end{align}
	which contradicts with   $U_{K_{\max,U}}^{m+1,s_0+1}>  M_1$. Hence we have $U_{K_{\max,U}}^{m+1,s_0+1}\leq M_1$. $U_{K_{\min,U}}^{m+1,s_0+1}\geq m_1$ can be proved similarly. It means that $U^{m+1,s_0+1}_K\in [m_1,M_1]$ for any  $K \in  \mathcal{J}$.
	
	The second step  is to show that  $V^{m+1,s_0+1}\in [m_2,M_2]$.
	
	If $V^{m+1,s_0+1}\in [m_2,M_2]$ does not hold, then we have   $V^{m+1,s_0+1}_{K_{\max,V}}> M_2$ or  $V^{m+1,s_0+1}_{K_{\min,V}}< m_2$.  Let us suppose that   $V^{m+1,s_0+1}_{K_{\max,V}}> M_2$  holds. Similar to \eqref{the2_eq1}, it follows that 
	\begin{align}
		\begin{aligned}
			\frac{V^{m+1,s_0+1}_{K_{\max,V}}-M_2}{\Delta t} \leq   \lambda (V^{m+1,s_0}_{K_{\max,V}}-V^{m+1,s_0+1}_{K_{\max,V}})+f_{2}(U^{m+1,s_0+1}_{K_{\max,V}},V^{m+1,s_0}_{K_{\max,V}}).
		\end{aligned}
	\end{align}
	According to the monotonicity of $f_2$ with respect to $u$, we have $f_{2}(U^{m+1,s_0+1}_{K_{\max,V}},V^{m+1,s_0}_{K_{\max,V}}) \leq f_2(M_1,V^{m+1,s_0}_{K_{\max,V}})$.
	Similar to the derivation of \eqref{the_eq10}, it can be shown that
	\begin{align*}
		\begin{aligned}
			\frac{V^{m+1,s_0+1}_{K_{\max,V}}-M_2}{\Delta t} &\leq  0,
		\end{aligned}
	\end{align*}
	which contradicts with   $V_{K_{\max,V}}^{m+1,s_0+1}>  M_2$. Hence we have $V_{K_{\max,V}}^{m+1,s_0+1} \leq  M_2$. Similarly, we can prove that $V_{K_{\min,V}}^{m+1,s_0+1} \geq m_2$. It means that $V^{m+1,s_0+1}_K\in [m_2,M_2]$ for any  $K \in  \mathcal{J}$. This completes the proof.   
\end{proof}

\section{Numerical  experiments}
In this section, numerical examples with different models  are presented to show the accuracy and 
the IRP-preserving property of the finite volume scheme.  The comparison results  with the nine-point scheme are shown  to demonstrate that the nine-point scheme  fails to  preserve the  invariant regions, which indicates the advantage of the IRP-preserving scheme. In the numerical examples, we use
\begin{align*}
	&\varepsilon^2_u=\left[\sum_{K \in \mathcal{J}_{i n}}\left(U_K-u(K)\right)^2 m(K)\right]^{1 / 2}, && \varepsilon^2_v=\left[\sum_{K \in \mathcal{J}_{i n}}\left(V_K-v(K)\right)^2 m(K)\right]^{1 / 2},
	\\
	&	\varepsilon_u^F=\left[\sum_{K \in \mathcal{J}}\left(F_{K, \sigma}-\mathcal{F}_{K, \sigma}\right)^2\right]^{1 / 2}, &&\varepsilon_v^F=\left[\sum_{K \in \mathcal{J}}\left(\tilde{F}_{K, \sigma}-\tilde{\mathcal{F}}_{K, \sigma}\right)^2\right]^{1 / 2}
\end{align*}
to evaluate approximate the  $L^2$ errors  and  the normal flux  errors of $u$ and $v$, respectively.

\subsection{Example 1}
In  the first example, we consider    the problem with  continuous diffusion coefficients on $\Omega =(0,1)\times (0,1)$.   The coefficients $\kappa_1=R_1 D_1 R_1^{\mathrm{T}}$ and $\kappa_2=R_2 D_2 R_2^{\mathrm{T}}$, where $R_1$, $R_2$, $D_1$ and $D_2$ are given by 
\begin{align*}
	R_1=\left(\begin{array}{cc}
		\cos \theta_1 & -\sin \theta_1 \\
		\sin \theta_1 & \cos \theta_1
	\end{array}\right), \quad D_1=\left(\begin{array}{cc}
		k_1 & 0 \\
		0 & k_2
	\end{array}\right)
\end{align*}
and
\begin{align*}
	R_2=\left(\begin{array}{cc}
		\cos \theta_2 & -\sin \theta_2 \\
		\sin \theta_2 & \cos \theta_2
	\end{array}\right), \quad D_2=\left(\begin{array}{cc}
		k_3 & 0 \\
		0 & k_4
	\end{array}\right).
\end{align*}
We take $\theta_1=\frac{5 \pi}{12}$,  $\theta_2=\frac{\pi}{3}$,  $k_1=1+2 x^2+y^2$, $k_2=1+x^2+2 y^2$,  $k_3=1+x^2+$ $2 y^2$, $k_4=1+2 x^2+y^2$.
We set $f_1(u,v) = u(1-u)(u-0.1)$ and $f_2(u,v) = u-v$ and take the exact solution
\begin{align*}
	\begin{aligned}
		& u(x,y,t)=\mathrm{e}^{-t} \sin (\pi x) \sin (\pi y), \\
		& v(x,y,t)=\mathrm{e}^{-t} \cos (\pi x) \cos (\pi y).
	\end{aligned}
\end{align*}
The exact solution provides the Dirichlet boundary conditions. We add  linear source terms in  the right sides  of the model \eqref{problem_1}-\eqref{problem_2}, which can be calculated from  the exact solution   correspondingly.

We set the final time  $T=1$ and a sufficiently small time step $\Delta t=1$E-4.  This example only   tests the spacial  convergence order,  and no investigation on IRP was conducted.  Denote by $N_c$ the number of cells.  
We display the $L^2$  errors  and   flux errors on the random quadrilateral and  triangular meshes  in Tables \ref{exp1-quad}-\ref{exp1-tri}, respectively. 
The two types of  random meshes are presented in Figs \ref{exp1-1}-\ref{exp1-2}.
From these tables, we observe that the $L^2$ errors     obtain   second-order convergence rate   on both the random quadrilateral and triangular  meshes,    and the flux errors  obtain first-order convergence rate  on the random quadrilateral meshes and obtain higher than  first-order convergence rate on random triangular meshes.

\begin{figure}[!h]
	\begin{minipage}[t]{0.45\linewidth}
		\centering
		\includegraphics[width=2.2in]{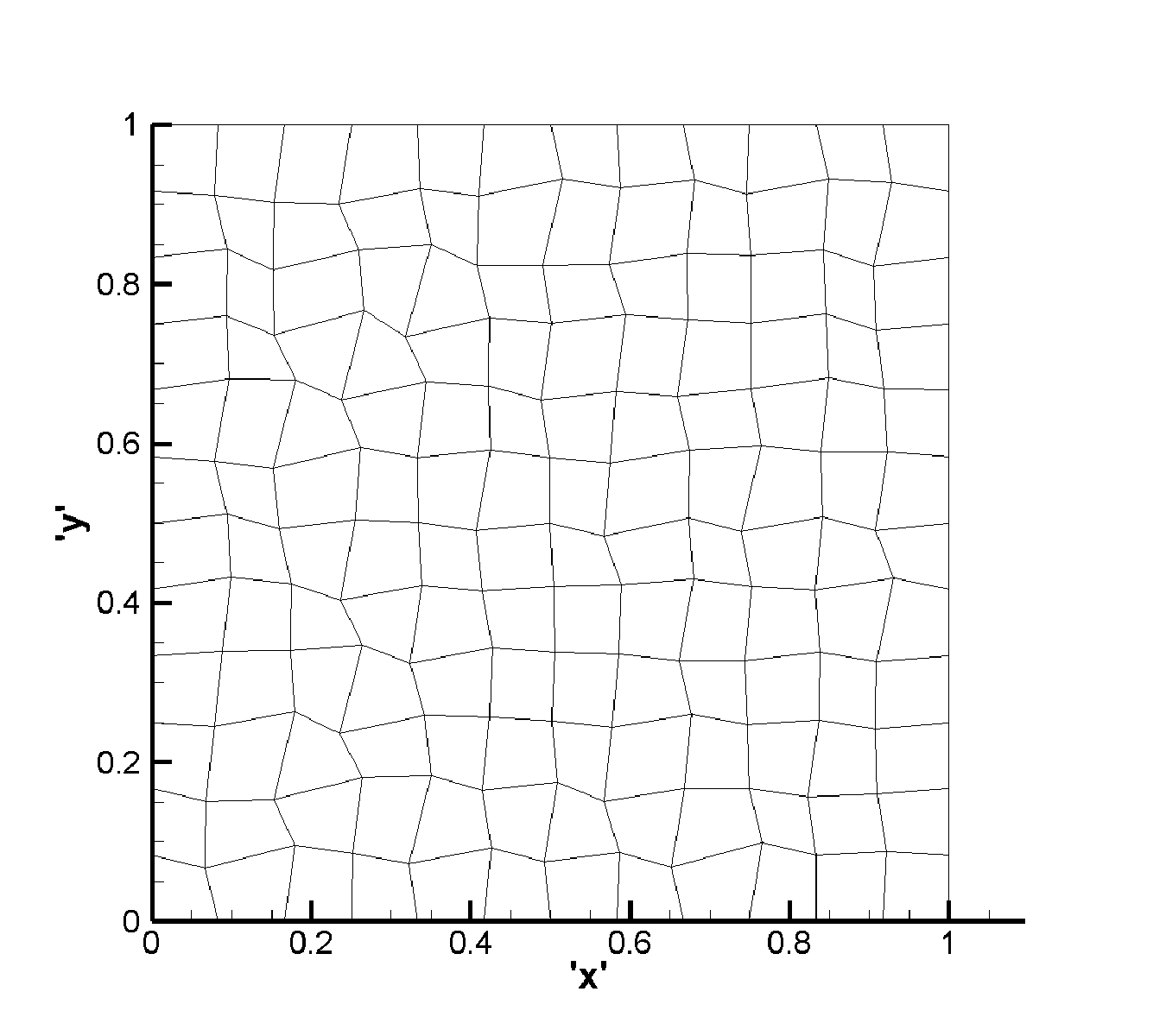}
		\caption{The  random quadrilateral  meshes.}  \label{exp1-1}
	\end{minipage}
	\hspace{1em}
	\begin{minipage}[t]{0.45\linewidth}
		\centering
		\includegraphics[width=2.2in]{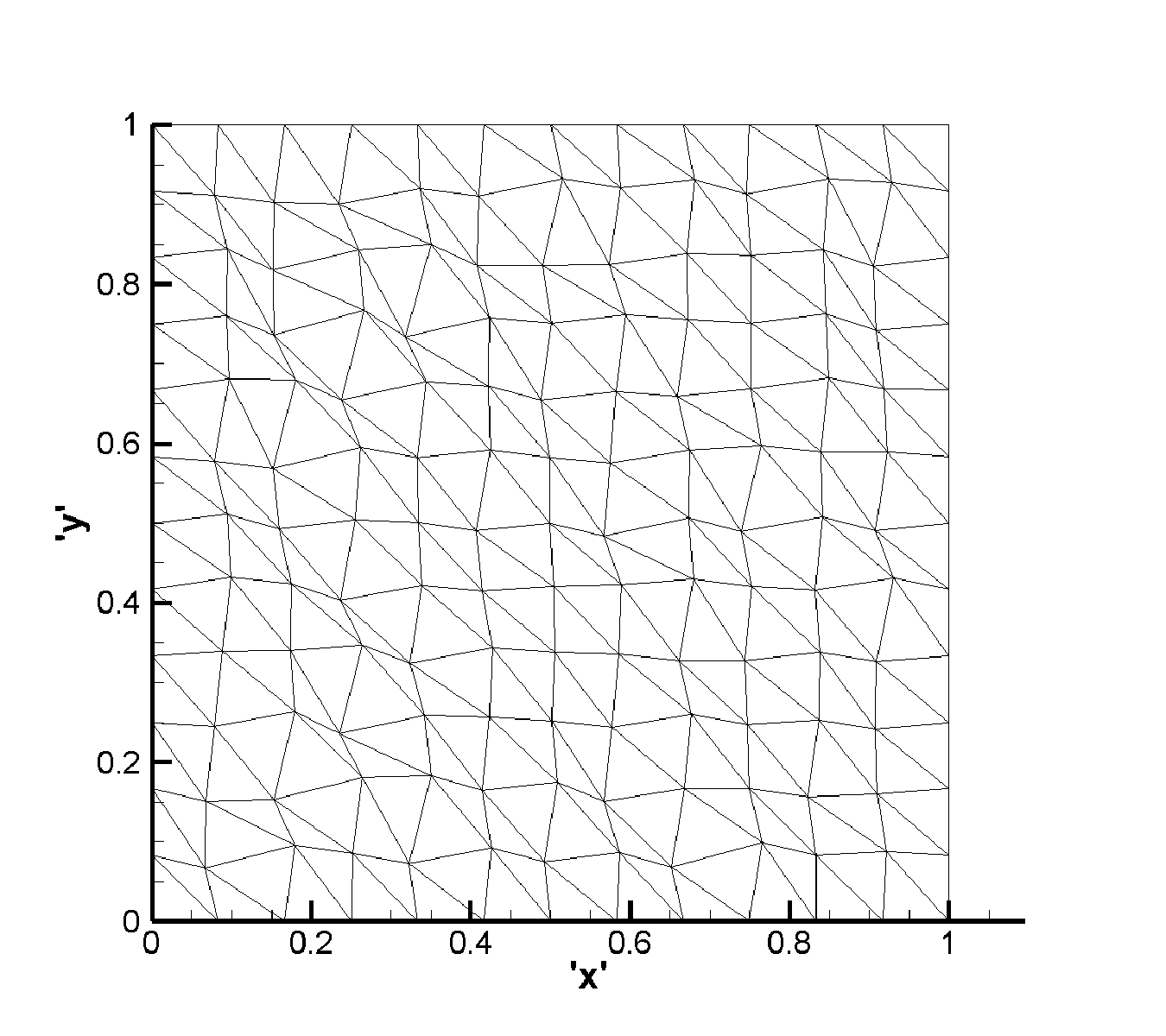}
		\caption{The  random triangular  meshes.}  \label{exp1-2}
	\end{minipage}
\end{figure}

\begin{table}[!h]
	\centering
	\caption{The   errors for Example 1 on the random quadrilateral meshes.}
	\begin{tabular}{lrrrr}
		\hline
		$N_c$ & 144   & 576   & 2304  & 9216 \\
		\hline
		$\varepsilon_u^2$         & 1.21E-03 & 3.98E-04 & 1.04E-04 & 2.21E-05 \\
		order &       & 1.61  & 1.93  & 2.23  \\
		$\varepsilon_u^F$  & 1.80E-02 & 8.27E-03 & 3.90E-03 & 2.02E-03 \\
		order &       & 1.12  & 1.08  & 0.94  \\
		$\varepsilon_v^2$ & 1.60E-03 & 4.14E-04 & 1.05E-04 & 2.63E-05 \\
		order &       & 1.94  & 1.98  & 1.99  \\
		$\varepsilon_v^F$ & 1.95E-02 & 8.50E-03 & 4.00E-03 & 1.96E-03 \\
		order &       & 1.19  & 1.08  & 1.02  \\
		\hline
	\end{tabular}%
	\label{exp1-quad}
\end{table}%
\begin{table}[!h]
	\centering
	\caption{The   errors for Example 1 on the random triangular  meshes.}
	\begin{tabular}{lrrrr}
		\hline
		$N_c$ & 288   & 1152  & 4608  & 18432 \\
		\hline
		$\varepsilon_u^2$         & 9.32E-04 & 2.45E-04 & 5.96E-05 & 1.38E-05 \\
		order &       & 1.92  & 2.03  & 2.10  \\
		$\varepsilon_u^F$  & 1.98E-02 & 6.23E-03 & 2.16E-03 & 9.31E-04 \\
		order &       & 1.66  & 1.52  & 1.21  \\
		$\varepsilon_v^2$ & 8.97E-04 & 2.52E-04 & 6.31E-05 & 1.56E-05 \\
		order &       & 1.83  & 1.99  & 2.01  \\
		$\varepsilon_v^F$ & 1.87E-02 & 6.65E-03 & 2.69E-03 & 1.11E-03 \\
		order &       & 1.49  & 1.30  & 1.27  \\
		\hline
	\end{tabular}%
	\label{exp1-tri}
\end{table}%

\subsection{Example 2}
In  this example, we test the accuracy of our scheme  under   the problem with discontinuous diffusion coefficient on $\Omega =(0,1)\times (0,1)$.  We consider the exact solution  
\begin{align*}
	\begin{aligned}
		& u(x,y,t)= \begin{cases} \mathrm{e}^{-t}\left(x-\frac{2}{3}\right)\left(x^3+y^3\right), & \text{if~} x \leq \frac{2}{3}, \\
			4\mathrm{e}^{-t}\left(x-\frac{2}{3} \right)\left(x^3+y^3\right), &  \text{if~} x>\frac{2}{3},\end{cases} \\
		& v(x,y,t)= \begin{cases}\mathrm{e}^{-t}\left(x-\frac{2}{3}\right)\left(x^2-y^2\right), & \text{if~} x \leq \frac{2}{3}, \\
			4\mathrm{e}^{-t}\left(x-\frac{2}{3}\right)\left(x^2-y^2\right), &  \text{if~}x>\frac{2}{3},\end{cases}
	\end{aligned}
\end{align*}  
and take the diffusion coefficient
\begin{align*}
	\begin{aligned}
		& \kappa_1=\kappa_2= \begin{cases}4I, & \text{if~} x \leq \frac{2}{3}, \\
			I, & \text{if~} x>\frac{2}{3}.\end{cases} \\
	\end{aligned}
\end{align*}  
The functions $f_1(u,v)$ and $f_2(u,v)$  are  
\begin{align*}
	&	f_1(u,v)=u^2-{\rm{e}}^v, \\
	&	f_2(u,v)=u^3-  v,
\end{align*}
and the linear source functions   are  calculated correspondingly.

We set the final time  $T=1$ and a sufficiently small time step $\Delta t=1$E-4 in order to test the spacial  accuracy,   no investigation on IRP was conducted.  Tables \ref{exp2-quad}-\ref{exp2-tri} provide the $L^2$ norm of errors in solutions and fluxes on two types of random meshes. The numerical  results reveal that our scheme   obtains  second-order convergence rate for the $L^2$ errors of solutions   and  is higher than first-order convergence rate for the flux  errors.

\begin{table}[htbp]  \centering  \caption{The   errors for Example 2 on the random quadrilateral meshes.}    \begin{tabular}{lrrrr}         
		\hline 
		$N_c$ & 144   & 576   & 2304  & 9216 \\
		\hline 
		\multicolumn{1}{l}{ $\varepsilon_u^2$        } & 3.44E-03 & 8.96E-04 & 2.26E-04 & 5.67E-05 
		\\
		order&       & 1.94  & 1.98  & 1.99  
		\\
		\multicolumn{1}{l}{$\varepsilon_u^F$ } & 2.14E-02 & 7.63E-03 & 3.03E-03 & 1.43E-03 
		\\
		order&       & 1.48  & 1.33  & 1.08  
		\\
		\multicolumn{1}{l}{$\varepsilon_v^2$} & 2.26E-03 & 5.77E-04 & 1.45E-04 & 3.66E-05 
		\\
		order&       & 1.96  & 1.99  & 1.98  
		\\
		\multicolumn{1}{l}{$\varepsilon_v^F$} & 1.62E-02 & 5.84E-03 & 2.38E-03 & 1.11E-03 
		\\
		order	&       & 1.47  & 1.29  & 1.10  
		
		\\
		\hline
	\end{tabular}   \label{exp2-quad}
\end{table}%

\begin{table}[htbp]
	\centering
	\caption{The   errors for Example 2 on the random triangular  meshes.}
	\begin{tabular}{lrrrr}
		\hline
		$N_c$ & 288   & 1152  & 4608  & 18432 \\
		\hline
		$\varepsilon_u^2$         & 2.02E-03 & 5.44E-04 & 1.36E-04 & 3.42E-05 \\
		order &       & 1.89  & 1.99  & 1.99  \\
		$\varepsilon_u^F$  & 1.41E-02 & 4.61E-03 & 1.53E-03 & 6.10E-04 \\
		order &       & 1.61  & 1.59  & 1.32  \\
		$\varepsilon_v^2$ & 7.40E-04 & 2.00E-04 & 4.94E-05 & 1.27E-05 \\
		order &       & 1.88  & 2.01  & 1.95  \\
		$\varepsilon_v^F$ & 1.00E-02 & 3.24E-03 & 1.09E-03 & 4.00E-04 \\
		order &       & 1.62  & 1.56  & 1.45  \\
		\hline
	\end{tabular}%
	\label{exp2-tri}
\end{table}%

\subsection{Example 3}
In Example 3, we  consider a  semilinear parabolic system illustrating  the  superconductivity of liquids, where $\kappa_1$ and $\kappa_2$ are positive definite diagonal matrices, and the nonlinear reaction-diffusion terms are as follows:
\begin{align*}
	& f_1(u,v)  = (1-u^2-v^2)u,
	\\
	&f_2(u,v)  = (1-u^2-v^2)v.
\end{align*}
$(f_1,f_2)$ is quasimonotone nonincreasing in $[0,+\infty)\times [0,+\infty)$.
The domain of this example is set to be a square with a hole $\Omega  = (0,1)^2\backslash [4/9,5/9]^2$, where internal and external boundaries are denoted by $\Gamma_ 1$ and $\Gamma_2$, respectively.  Let  the diffusion coefficients $\kappa_1$ and $\kappa_2$ be  discontinuous at   $\Gamma$, where $\Gamma$ are composed of the edges of the square $(2/9,7/9)^2$.  The domain $\Omega$   is divided into two parts by $\Gamma$,  where $\Omega_1 = (0,1)^2 \backslash [2/9,7/9]^2$ and $\Omega_2= \Omega \backslash  \Omega_1$.  We take the diffusion coefficients  
\begin{align*}
	\begin{aligned}
		& \kappa_1= \begin{cases} 2I,  &\text{in~} \Omega_1,
			\\
			\left(\begin{array}{cc}
				y^2+\varepsilon & -(1-\varepsilon) x y \\
				-(1-\varepsilon) x y & x^2+\varepsilon
			\end{array}
			\right),  & \text{in~} \Omega_2,
		\end{cases} \\
		&\kappa_2= \begin{cases}  I,  &\text{in~} \Omega_1,
			\\
			\left(\begin{array}{cc}
				y^2+\varepsilon &  (1-\varepsilon) x y \\
				(1-\varepsilon) x y & x^2+\varepsilon
			\end{array}
			\right),  & \text{in~} \Omega_2,
		\end{cases}
	\end{aligned}
\end{align*}  
where $\varepsilon = 5$E-3,  and take  the  initial and   boundary conditions as 
\begin{align*}
	\begin{aligned}
		g_1(x,y,t)=\begin{cases} 
			0, & \text { in  } \Gamma_1,
			\\
			1, & \text { in  } \Gamma_2,
		\end{cases}  
	\end{aligned}
	\quad 
	\begin{aligned}
		g_2(x,y,t)=\begin{cases} 
			1, & \text { in  } \Gamma_1,
			\\
			0, & \text { in  } \Gamma_2,
		\end{cases}  
	\end{aligned}
\end{align*}
and
\begin{align*}
	\begin{aligned}
		u_0(x,y)=\begin{cases} 
			0, & \text { in  } \Omega_1,
			\\
			1, & \text { in  } \Omega_2,
		\end{cases}  
	\end{aligned}
	\quad 
	\begin{aligned}
		v_0(x,y)=\begin{cases} 
			1, & \text { in  } \Omega_1,
			\\
			0, & \text { in  } \Omega_2.
		\end{cases}  
	\end{aligned}
\end{align*}
It is obvious  that $[0,1]\times [0,1]$ is its invariant region, and the Lipschitz constant $\lambda  = 5$.

We observe the invariant region properties on random quadrilateral meshes with $N_c = 2916$ and random triangular meshes with $N_c = 5832$, respectively.
The numerical solutions are displayed in Figs \ref{exp3-1}-\ref{exp3-2} for random quadrilateral meshes , and in Figs \ref{exp3-3}-\ref{exp3-4} for random triangular meshes , respectively.
The minimum and maximum values  for $U$ on  random quadrilateral  are 0 and 1, respectively. Similarly, the minimum and maximum values  for $V$ on   random  triangular meshes are 0 and 1. The numerical results demonstrate the IRPs of our scheme.

\begin{figure}[!h]
	\begin{minipage}[t]{0.45\linewidth}
		\centering
		\includegraphics[width=2.2in]{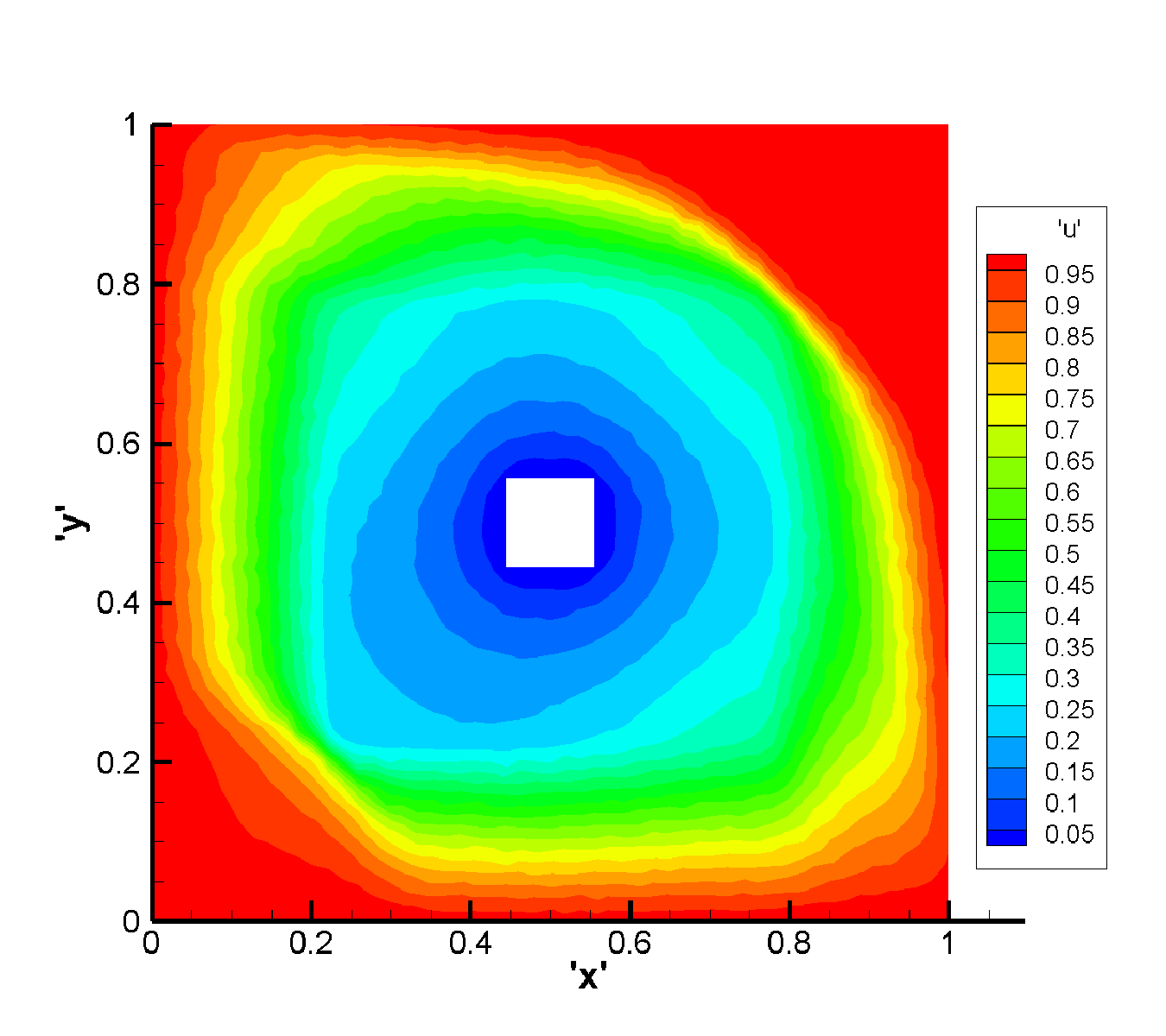}
		\caption{The numerical solution $U$ of the IRP-preserving scheme  for Example 3 on the random quadrilateral meshes ($U_{\min} =  0$, $U_{\max} =   1$).}  \label{exp3-1}
	\end{minipage}
	\hspace{1em}
	\begin{minipage}[t]{0.45\linewidth}
		\centering
		\includegraphics[width=2.2in]{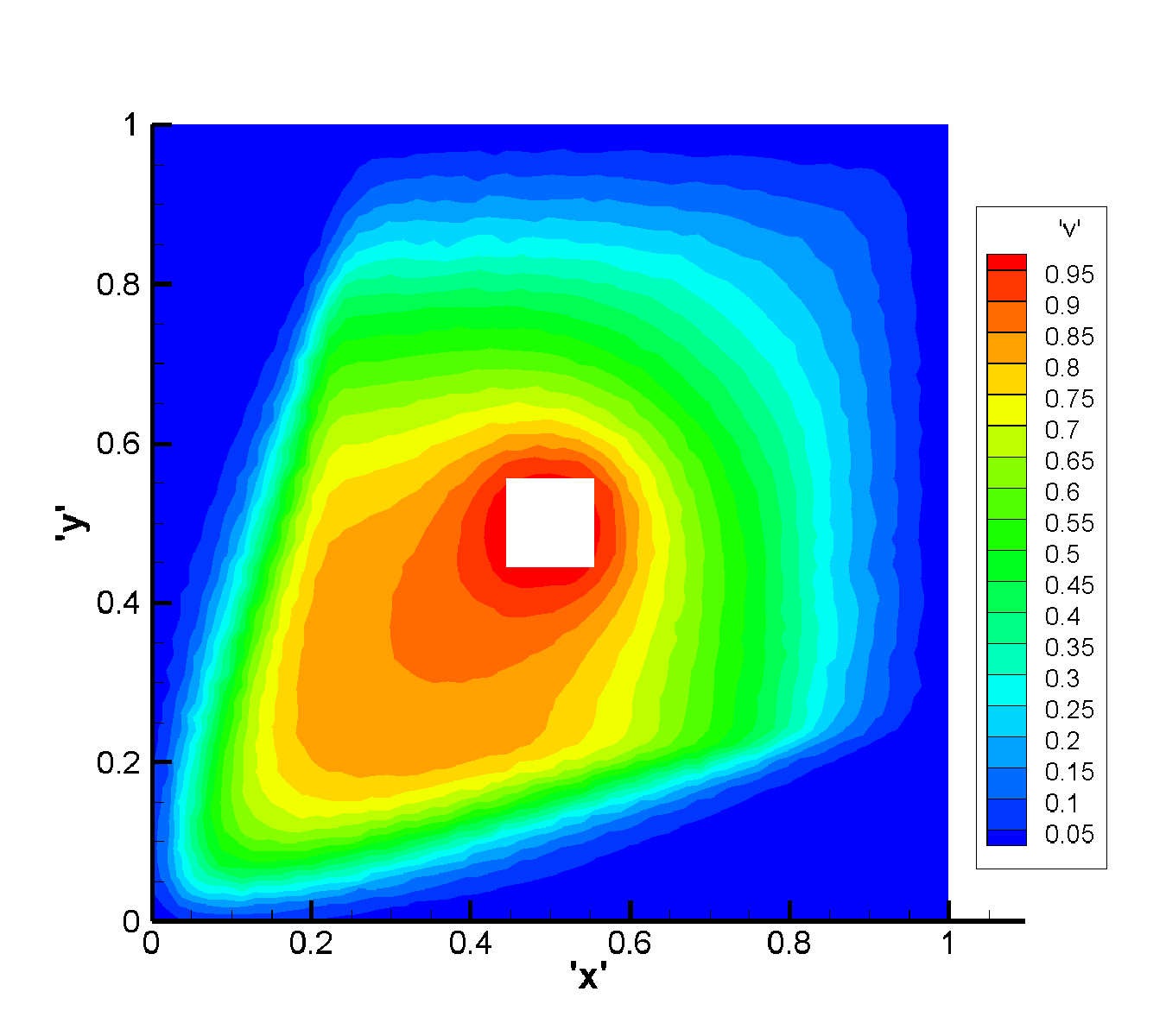}
		\caption{The numerical solution $V$  of the  IRP-preserving scheme   for Example 3 on the  random quadrilateral meshes ($V_{\min} = 1$, $V_{\max} =    1$).}  \label{exp3-2}
	\end{minipage}
	\\
	\begin{minipage}[t]{0.45\linewidth}
		\centering
		\includegraphics[width=2.2in]{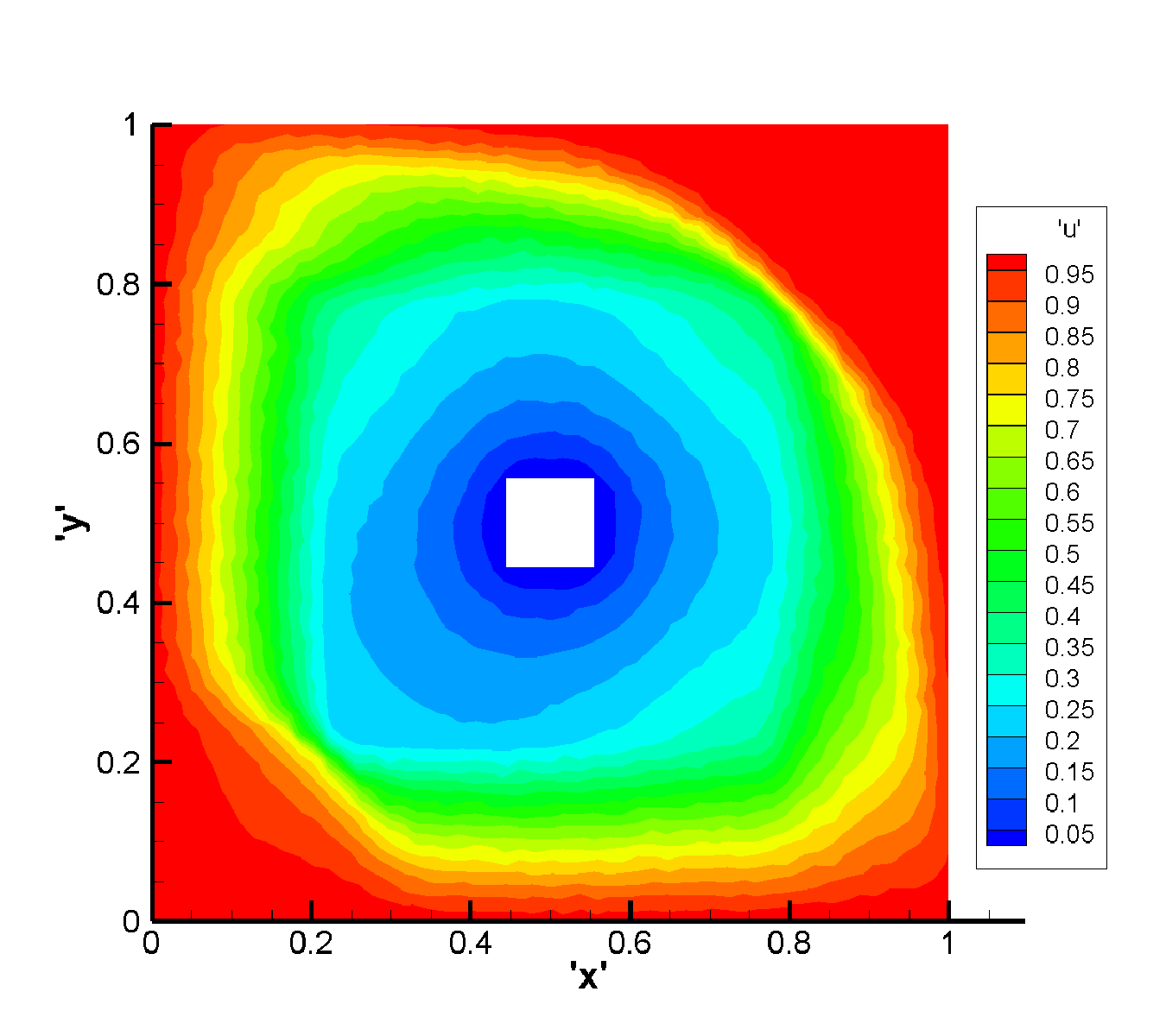}
		\caption{The numerical solution $U$ of the IRP-preserving scheme  for Example 3 on the random triangular meshes ($U_{\min} =  0$, $U_{\max} =  1$).}  \label{exp3-3}
	\end{minipage}
	\hspace{1em}
	\begin{minipage}[t]{0.45\linewidth}
		\centering
		\includegraphics[width=2.2in]{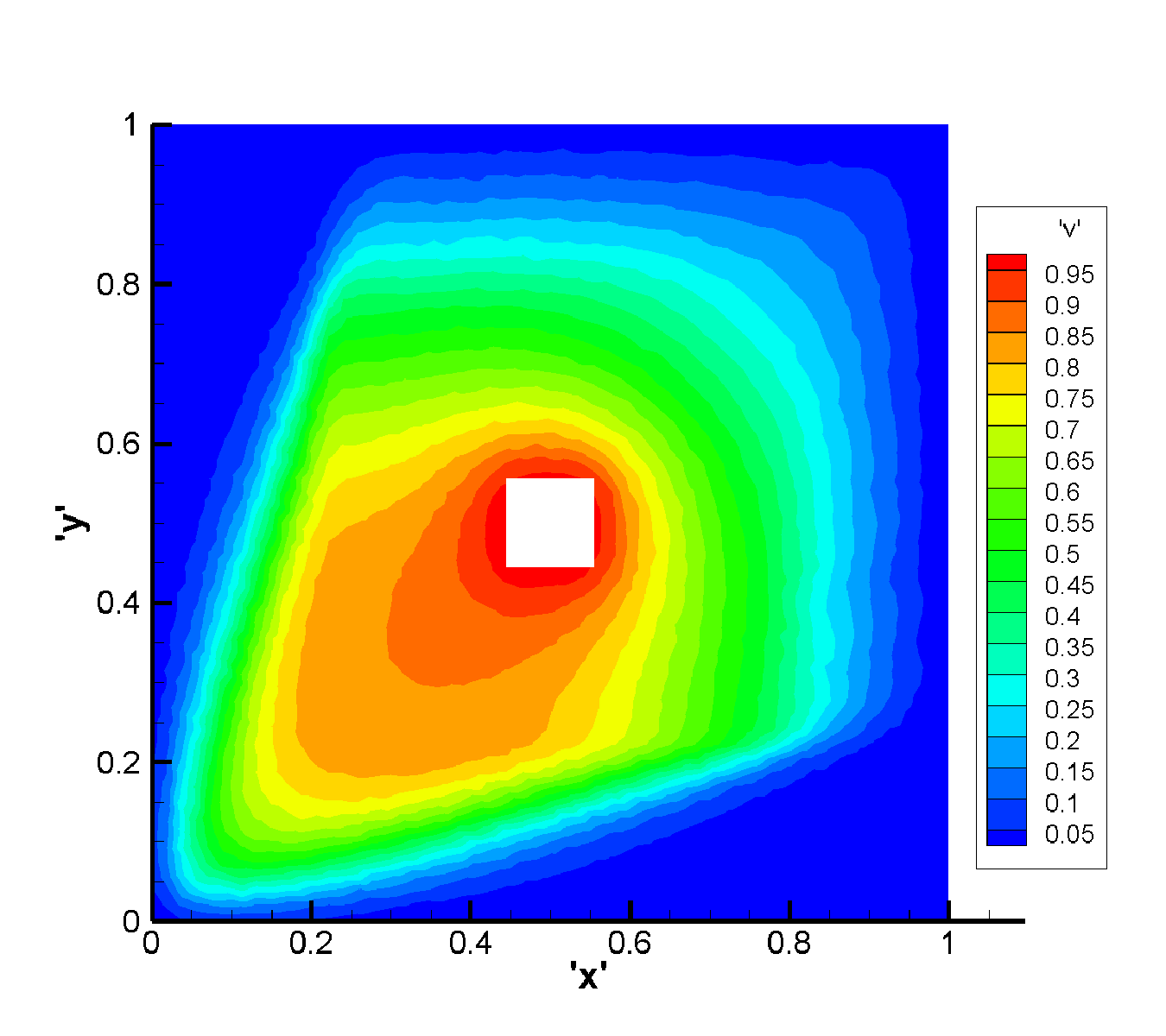}
		\caption{The numerical solution $V$  of the IRP-preserving scheme   for Example 3 on the  random triangular meshes ($V_{\min} =  0$, $V_{\max} =   1$).}  \label{exp3-4}
	\end{minipage}
\end{figure}

\subsection{Example 4}
In Example 4, we consider a simplified model of the  Belousov-Zhabotinski reaction on $\Omega =(0,1)\times (0,1)$, which is a classical model in non-equilibrium thermodynamics.  This model takes  the reaction  functions
\begin{align*}
	& f_1(u,v)  = u(a-bu-cv),
	\\
	&f_2(u,v)  =  -uv,
\end{align*}
where $a, b, c$ are positive constants. When $u\geq 0$ and $v\geq 0$, the $(f_ 1, f_2 )$ are quasimonotone nonincreasing, and $f_1$ and $f_2$ are Lipschitz continuous in any finite region of $(u,v)$. We take $a=1$, $b =2$,  $c=10$ and take the functions 
\begin{align*}
	\begin{aligned}
		u_0(x,y)=g_1(x,y,t)=\begin{cases} 
			0, & \text { if } 2x-y \leq-\frac{1}{16}, 
			\\
			8(2x-y)+0.5, & \text { if }-\frac{1}{16}<2x-y \leq \frac{1}{16}, 
			\\
			1, & \text { if } 2x-y>\frac{1}{16},
		\end{cases}  
	\end{aligned}
\end{align*}
and
\begin{align*}
	\begin{aligned}
		v_0(x,y)=g_2(x,y,t)=\begin{cases}  
			0, & \text { if } x+2y \leq \frac{15}{16}, \\
			16(x+2y)-15, & \text { if } \frac{15}{16}<x+2y \leq \frac{17}{16}, \\
			2, & \text { if } x+2y>\frac{17}{16}.
		\end{cases} 
	\end{aligned}
\end{align*}
A simple  calculation gives    $(u_0,v_0), (g_1,g_2)\in [0,1]\times[0,2]$,  the Lipschitz constant $\lambda = 35$.  According to  Lemma \ref{F-N-invariant_region}, the invariant region of  the exact solution  is  $[0,1]\times[0,2]$. 

We take $T=1$ and $\Delta t =1$E-3. We solve the problem using our scheme    and the nine-point (N-P) scheme   on the random quadrilateral meshes with $N_c=3600$ and random triangular meshes with $N_c=7200$, respectively.  The numerical solutions are presented  in Figures \ref{exp4-1}-\ref{exp4-2}. 
We denote   the maximum and minimum  of the solution vector $U$ as $U_{\max}$ and $U_{\min}$, respectively.  $V_{\max}$ and $V_{\min}$ are denoted similarly.
The numbers  of overshoots and undershoots of numerical solution are  denoted as $N_c^{o}$ and $N_c^{u}$, and the corresponding percentages are indicated by “pct”. 
Table \ref{exp4-quad} and Table \ref{exp4-tri}  summarize the maxima  and minima, the numbers of  overshoots and undershoots and their percentages of the two schemes, respectively. 
As shown in Tables \ref{exp4-quad}-\ref{exp4-tri}, the solution of our scheme on both meshes remains within the range of  $[0,1]\times[0,2]$, whereas the nine-point scheme fails to preserve the IRP.  In the latter case, we set the numerical solutions that fall outside the invariant region to gray for visual clarity.

\begin{table}[!htbp]  \centering  \caption{The maxima and minima  of the IRP-preserving scheme and nine-point scheme for Example 4 on random quadrilateral meshes.} 
	\begin{tabular}{lrrrrrr}   
		\hline
		method & \multicolumn{1}{l}{$U_{\max}$} &
		\multicolumn{1}{r}{$N_c^{o}$} & \multicolumn{1}{r}{pct} & \multicolumn{1}{r}{$U_{\min}$} & \multicolumn{1}{r}{$N_c^{u}$} & \multicolumn{1}{r}{pct} 
		\\
		\hline
		IRP   & 0     & 0     & 0.00\% & 1     & 0     & 0.00\% 
		\\
		NP    & 1.0290 & 562   & 15.61\% & -2.1341E-02 & 419   & 11.64\% 
		\\
		\hline
		& \multicolumn{1}{r}{$V_{\max}$} & \multicolumn{1}{r}{$N_c^{o}$} & \multicolumn{1}{r}{pct} & \multicolumn{1}{r}{$V_{\min}$} & \multicolumn{1}{r}{$N_c^{u}$} & \multicolumn{1}{r}{pct}  	  	  	
		\\
		\hline
		IRP   & 0     & 0     & 0.00\% & 2     & 0     & 0.00\% 
		\\
		NP    & 2.0641 & 655   & 18.19\% & -4.2454E-02 & 418   & 11.61\% 
		\\
		\hline
	\end{tabular}   \label{exp4-quad}
\end{table}%

\begin{table}[!htbp]  \centering  \caption{The  maxima and minima  of the IRP-preserving scheme and nine-point scheme for Example 4 on random  triangular meshes.} 
	\begin{tabular}{lrrrrrr}   
		\hline
		method & \multicolumn{1}{l}{$U_{\max}$} &
		\multicolumn{1}{r}{$N_c^{o}$} & \multicolumn{1}{r}{pct} & \multicolumn{1}{r}{$U_{\min}$} & \multicolumn{1}{r}{$N_c^{u}$} & \multicolumn{1}{r}{pct} 
		\\
		\hline
		IRP   & 0     & 0     & 0.00\% & 1     & 0     & 0.00\% 
		\\
		NP    & 1.0186& 699   & 9.71\% & -1.7108E-02 & 777   & 10.79\% 
		\\
		\hline
		& \multicolumn{1}{r}{$V_{\max}$} & \multicolumn{1}{r}{$N_c^{o}$} & \multicolumn{1}{r}{pct} & \multicolumn{1}{r}{$V_{\min}$} & \multicolumn{1}{r}{$N_c^{u}$} & \multicolumn{1}{r}{pct}  	  	  	
		\\
		\hline
		IRP   & 0     & 0     & 0.00\% & 2     & 0     & 0.00\% 
		\\
		NP    & 2.0694& 1535  & 21.32\% & -4.1207E-02 & 821   & 11.40\% 
		\\
		\hline
	\end{tabular}   \label{exp4-tri}
\end{table}%

\begin{figure}[!h]
	\begin{minipage}[t]{0.45\linewidth}
		\centering 
		\includegraphics[width=2.2in]{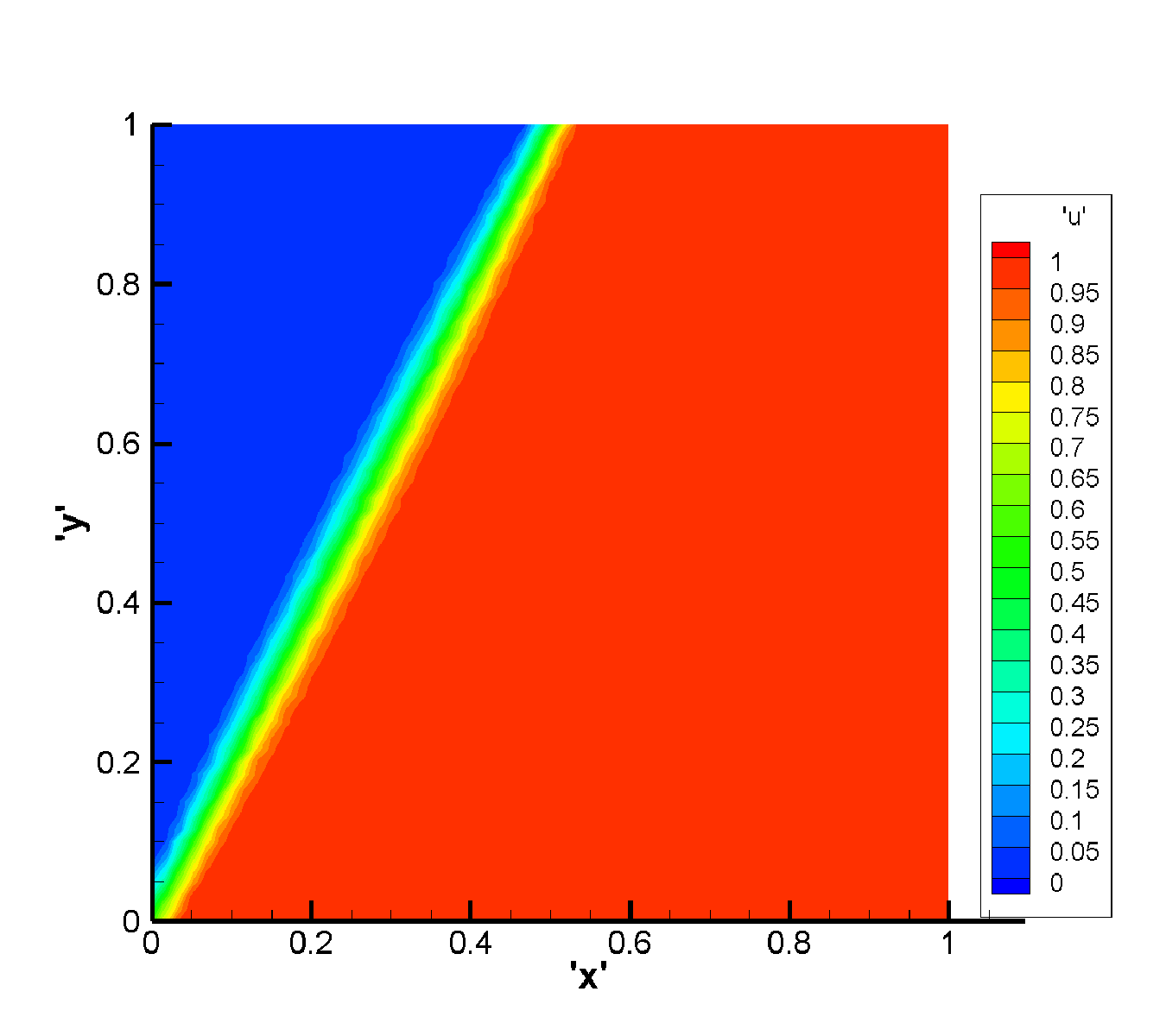}
		\caption{The numerical solution $U$ of the IRP-preserving scheme  for Example 4 on the random quadrilateral meshes ($U_{\min} = 0$, $U_{\max} =  1$).} \label{exp4-1}
	\end{minipage}
	\hspace{1em}
	\begin{minipage}[t]{0.45\linewidth}
		\centering
		\includegraphics[width=2.2in]{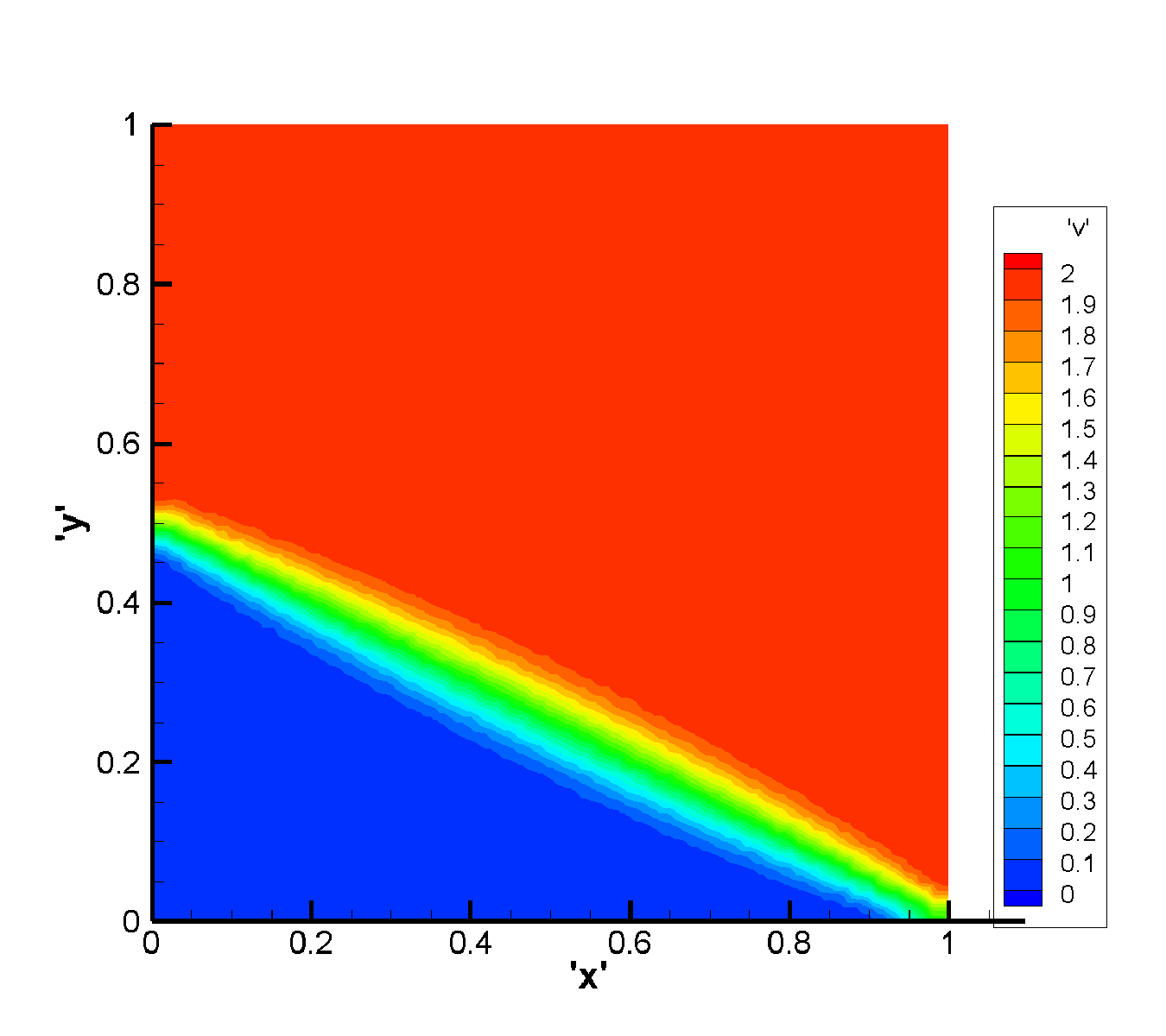}
		\caption{The numerical solution $V$  of the  IRP-preserving scheme   for Example 4 on the  random quadrilateral meshes ($V_{\min} = 0$, $V_{\max} =   2$).} 
	\end{minipage}
	\\
	\begin{minipage}[t]{0.45\linewidth}
		\centering
		\includegraphics[width=2.2in]{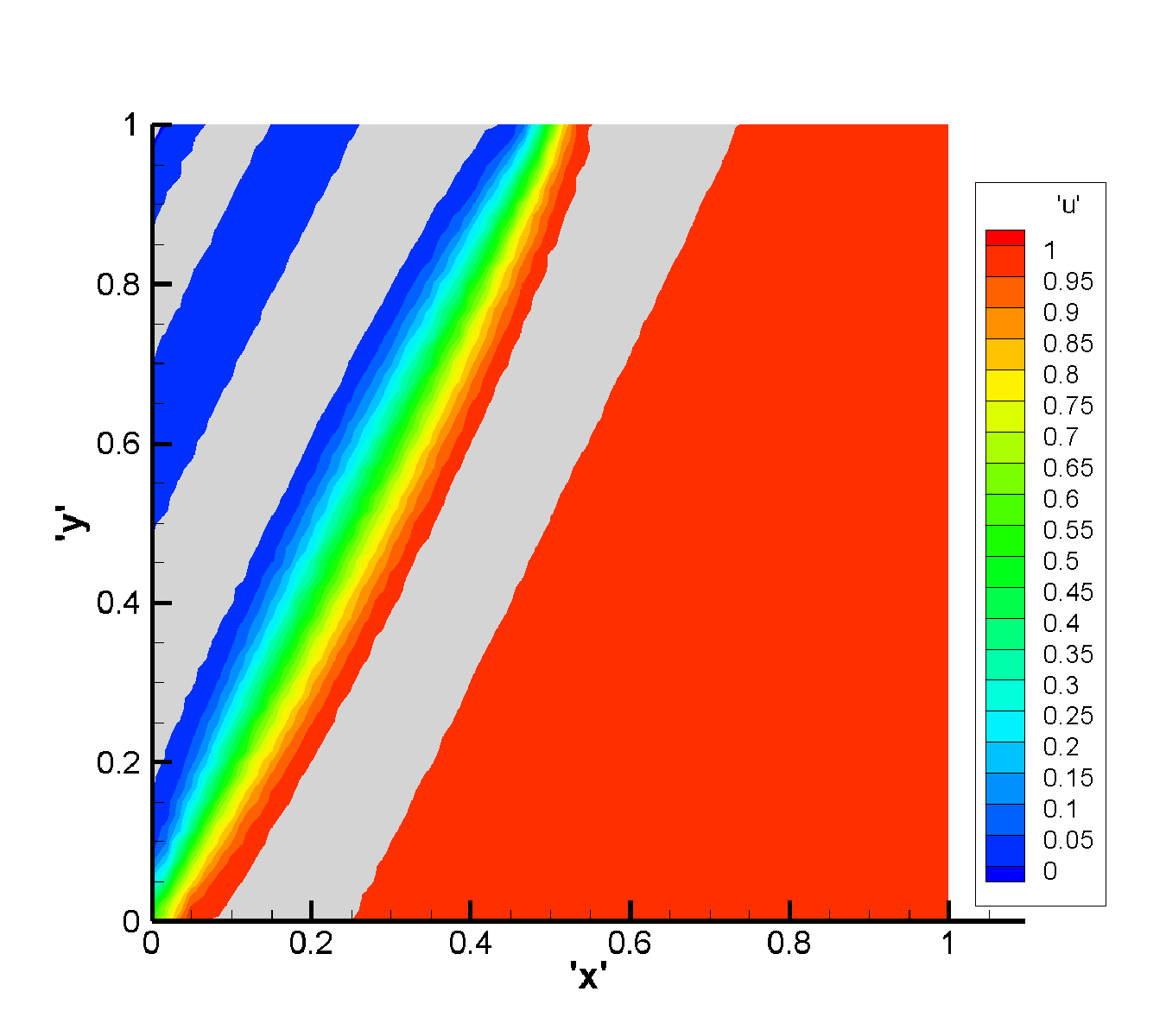}
		\caption{The numerical solution $U$ of the nine-point scheme  for Example 4 on the random quadrilateral meshes ($U_{\min} =  -2.1341$E-2, $U_{\max} = 1.0290$).} 
	\end{minipage}
	\hspace{1em}
	\begin{minipage}[t]{0.45\linewidth}
		\centering
		\includegraphics[width=2.2in]{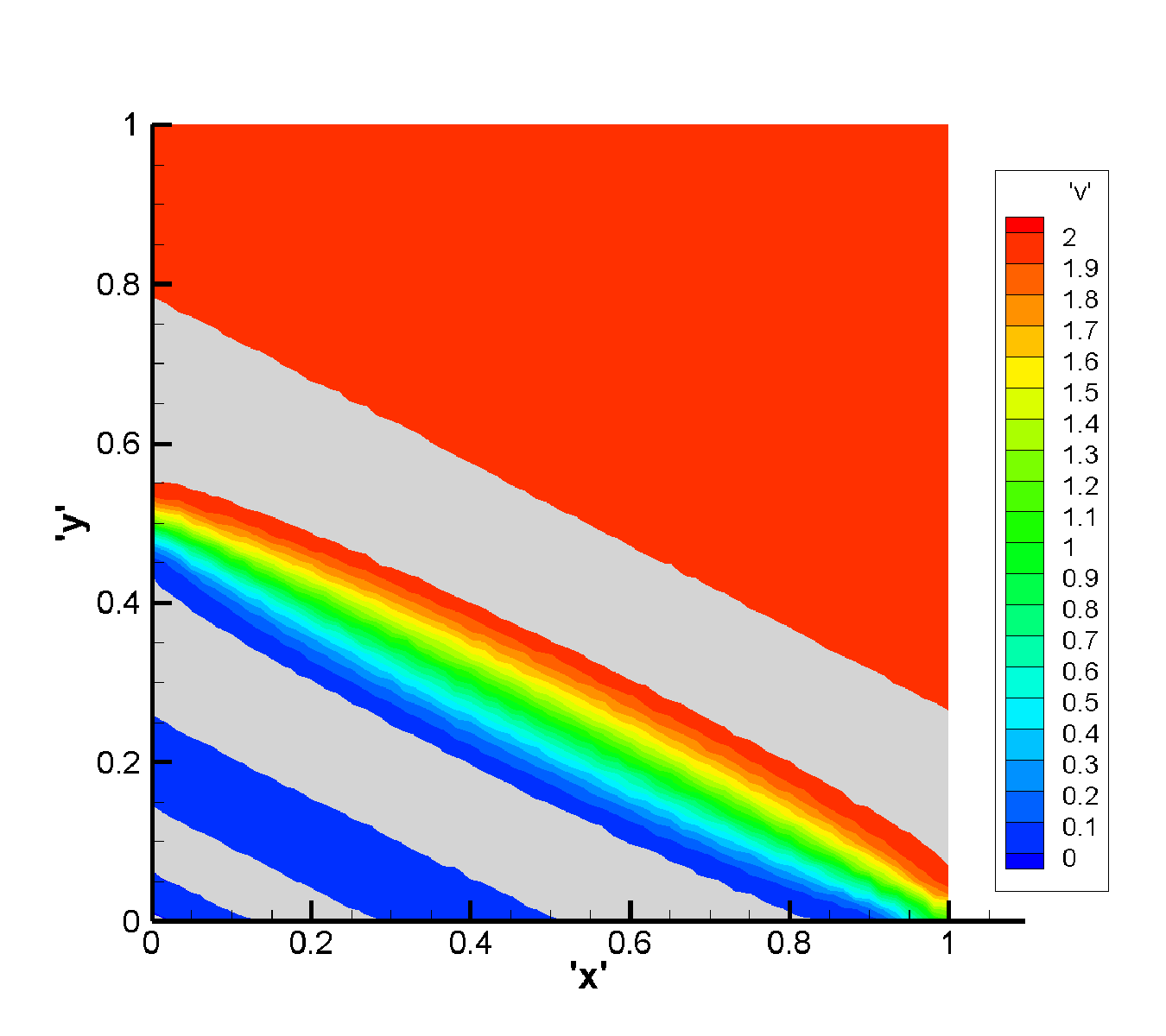}
		\caption{The numerical solution $V$  of the nine-point scheme   for Example 4 on the  random quadrilateral meshes ($V_{\min} =  -4.2454$E-2, $V_{\max} =     2.0641$).} 
	\end{minipage}
\end{figure}

\begin{figure}[!h]
	\begin{minipage}[t]{0.45\linewidth}
		\centering
		\includegraphics[width=2.2in]{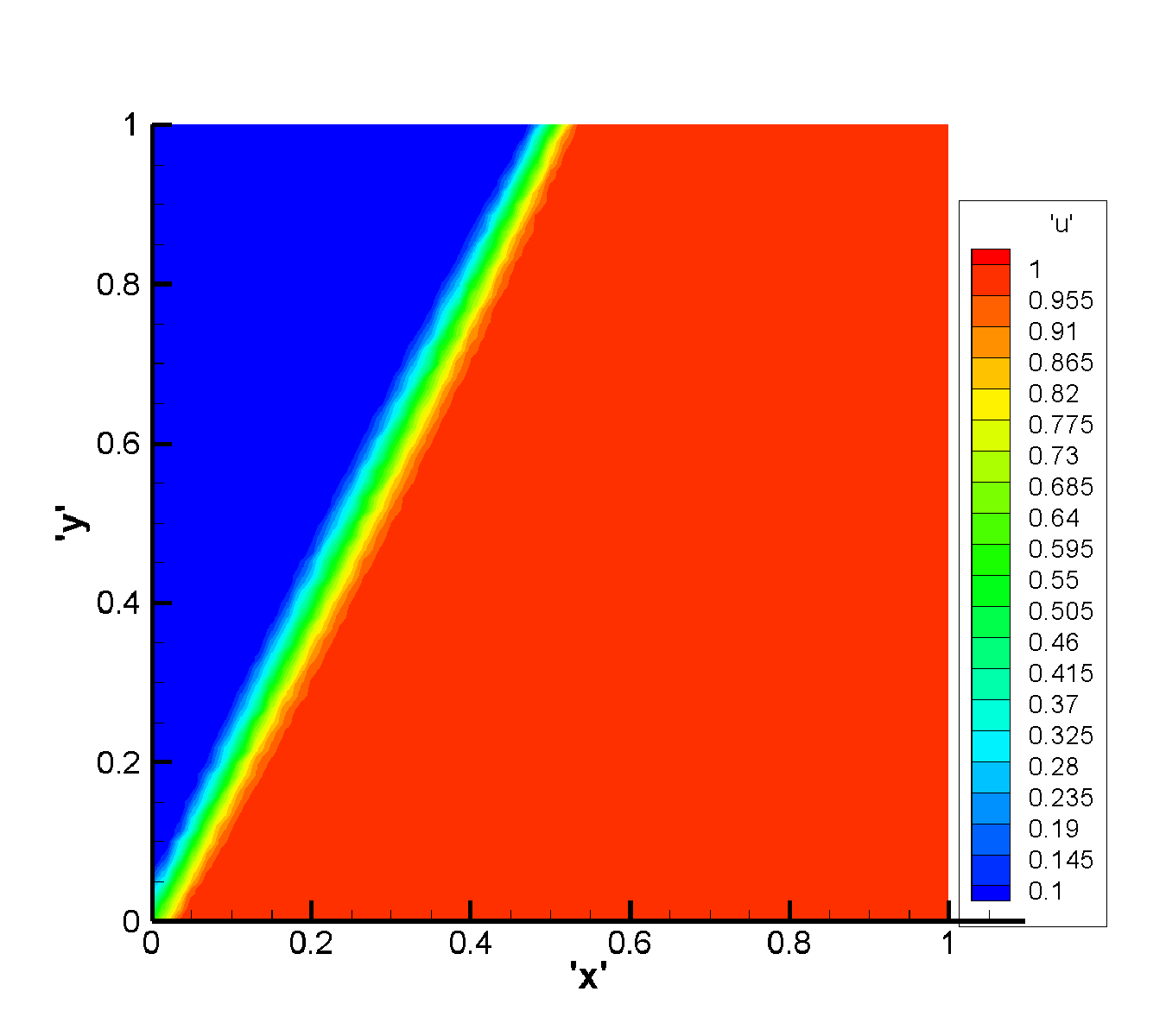}
		\caption{The numerical solution $U$ of the IRP-preserving scheme  for Example 4 on the random triangular meshes ($U_{\min} = 0$, $U_{\max} =  1$).} 
	\end{minipage}
	\hspace{1em}
	\begin{minipage}[t]{0.45\linewidth}
		\centering
		\includegraphics[width=2.2in]{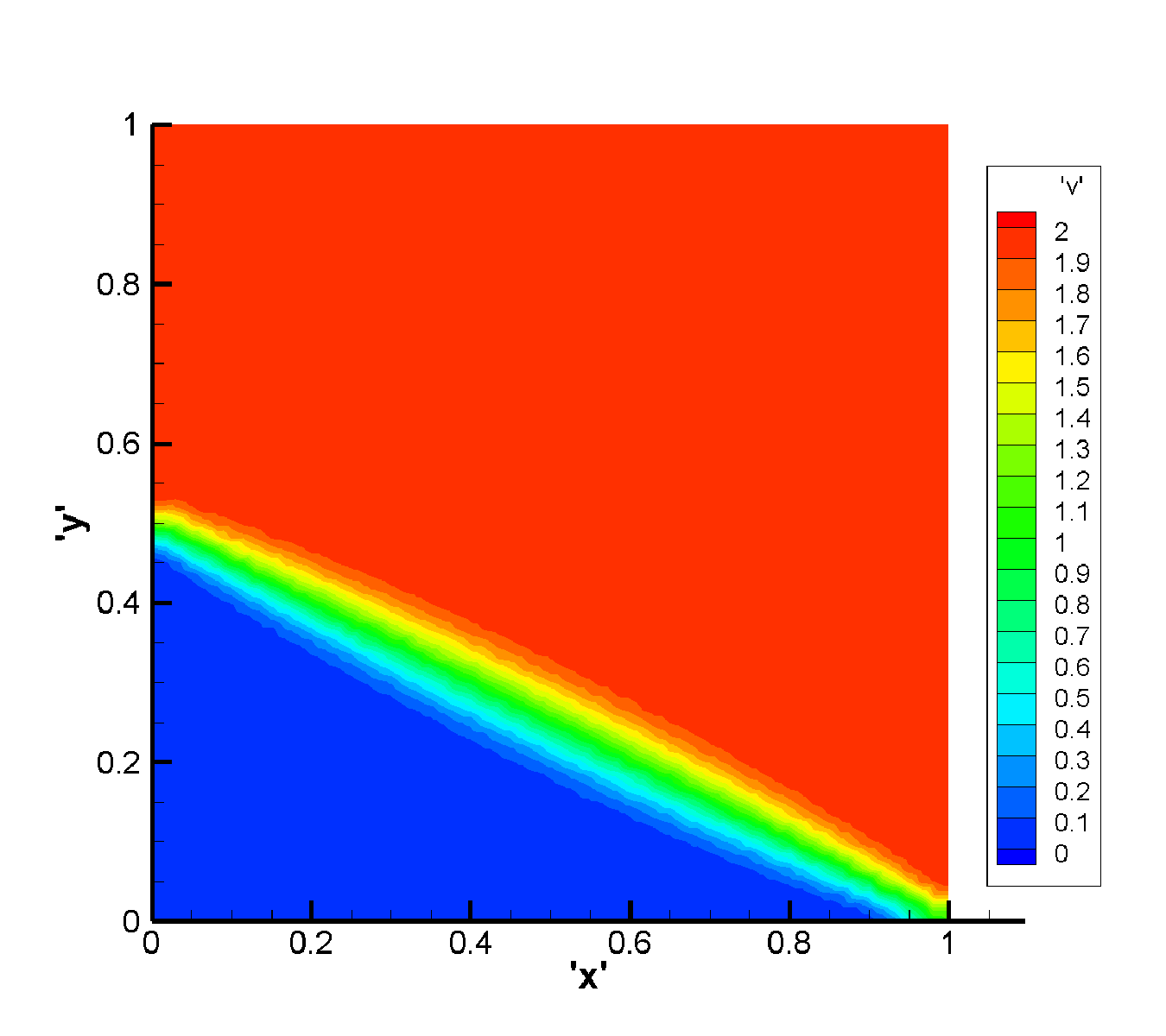}
		\caption{The numerical solution $V$  of the  IRP-preserving scheme   for Example 4 on the  random triangular meshes ($V_{\min} = 0$, $V_{\max} =   2$).} 
	\end{minipage}
	\\
	\begin{minipage}[t]{0.45\linewidth}
		\centering
		\includegraphics[width=2.2in]{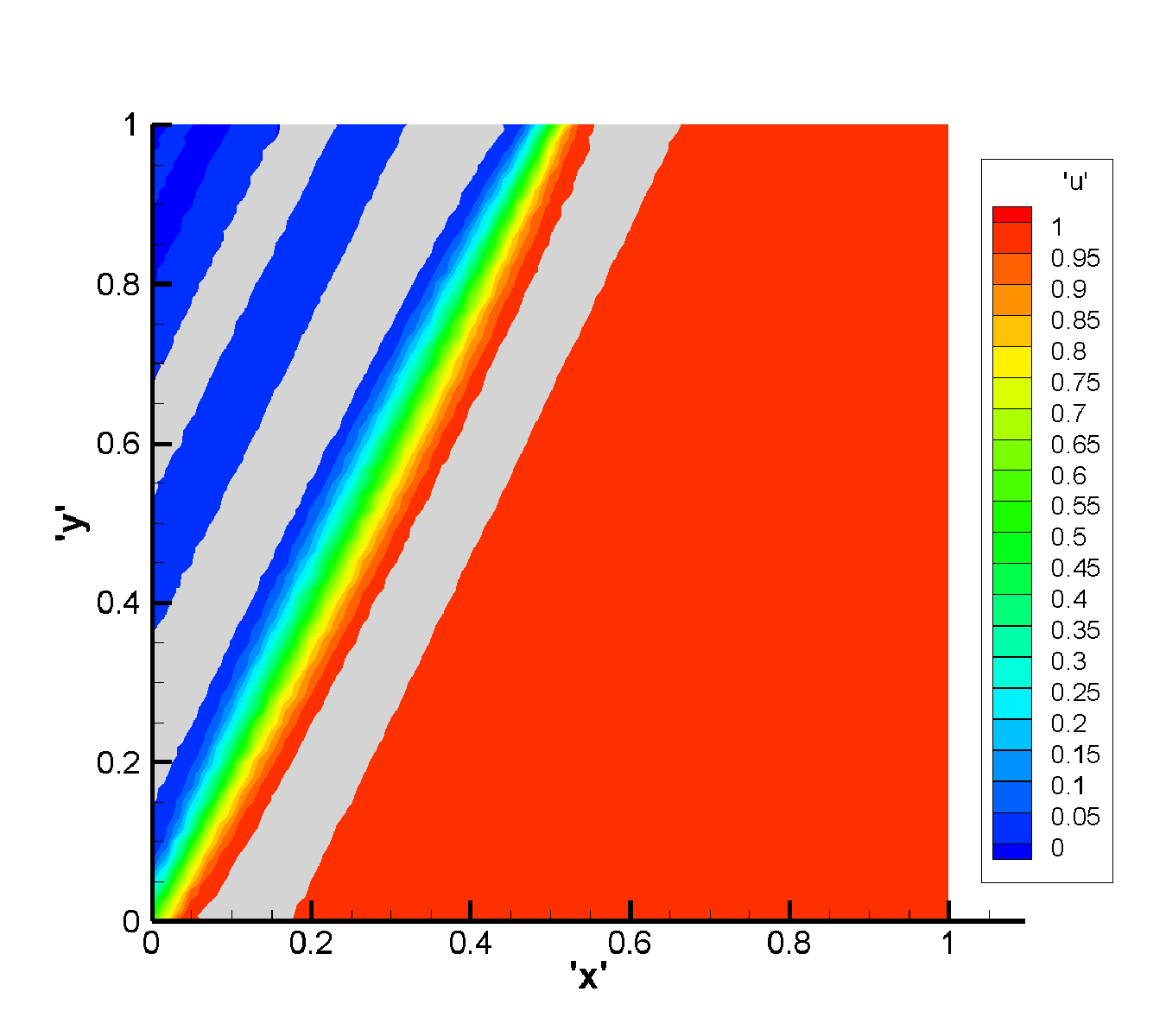}
		\caption{The numerical solution $U$ of the nine-point scheme  for Example 4 on the random triangular meshes ($U_{\min} =  1.7108$E-2, $U_{\max} = 1.0186$).} 
	\end{minipage}
	\hspace{1em}
	\begin{minipage}[t]{0.45\linewidth}
		\centering
		\includegraphics[width=2.2in]{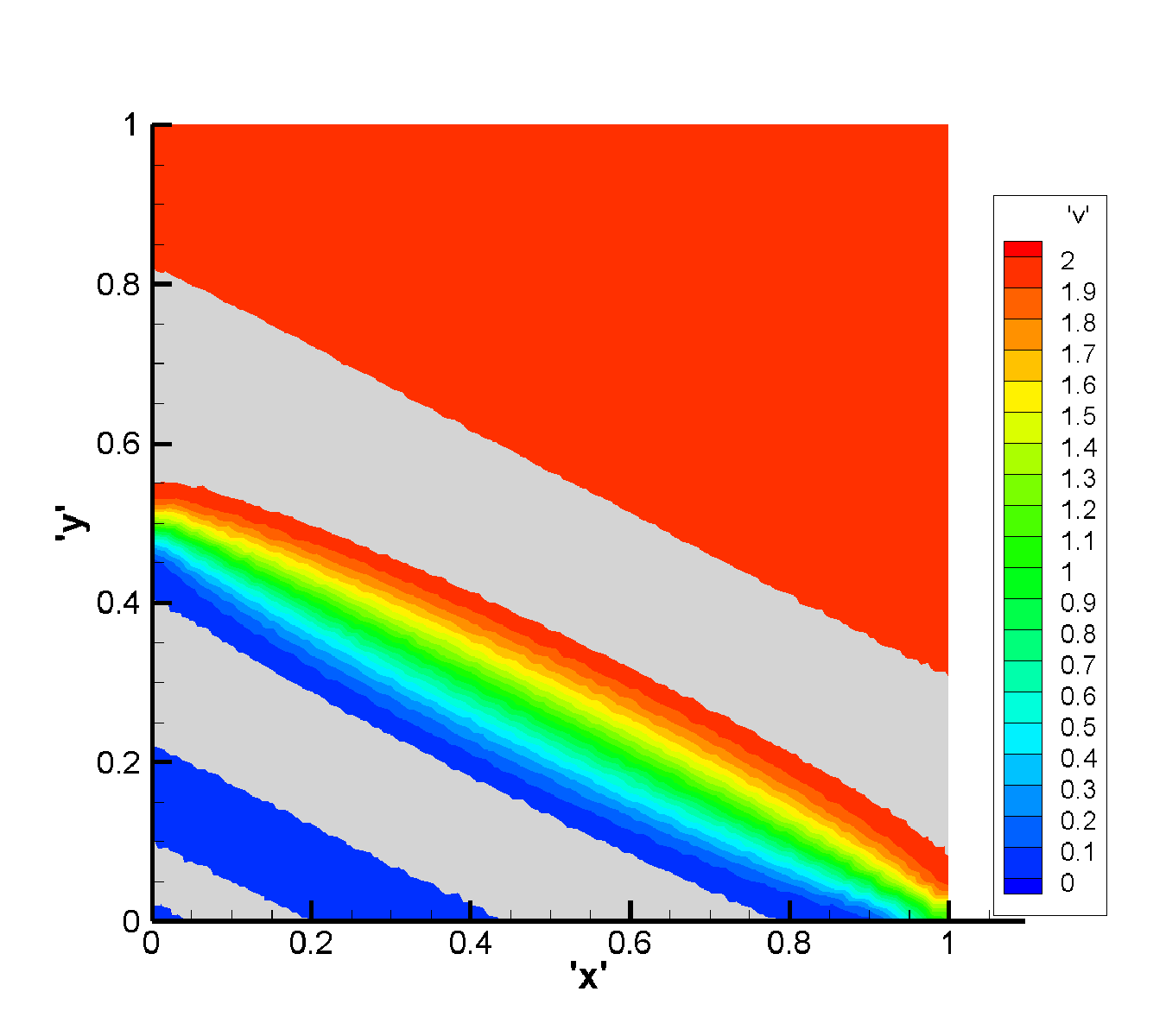}
		\caption{The numerical solution $V$  of the nine-point scheme   for Example 4 on the  random triangular meshes ($V_{\min} =  -4.1207$E-2, $V_{\max} =    2.0694$).} \label{exp4-2}
	\end{minipage}
\end{figure}

\section{Conclusion}
A finite volume method preserving the invariant region property is constructed  for   coupled   quasimonotone  reaction-diffusion  systems on general polygonal meshes. 
The backward Euler method and 
DMP-preserving finite volume scheme in \cite{Sheng2024} are employed to approximate the time derivatives  and the diffusion terms, respectively, which yields a nonlinear and conservative  scheme.  The  iterative scheme is constructed  to solve the nonlinear system. We prove that both the nonlinear scheme and the iterative method preserve the IRP for three kinds of quasimonotone systems.  Finally, some numerical examples are given to  illustrate the accuracy and the IRP-preserving property.

\section*{Acknowledgments}
This work is partially supported by
the National Natural Science Foundation of China (12201246), LCP Fund for Young Scholar
(6142A05QN22010), National Key R\&D Program of China (2020YFA0713601),
and by the Key Laboratory of Symbolic Computation and Knowledge Engineering of Ministry of Education, Jilin University, Changchun, 130012, P.R. China.


\end{document}